\newtheorem{theorem}{Theorem}[section]
\newtheorem{proposition}[theorem]{Proposition}
\newtheorem{lemma}[theorem]{Lemma}
\newtheorem{corollary}[theorem]{Corollary}
\newtheorem{conjecture}[theorem]{Conjecture}
\theoremstyle{remark}
\newtheorem{example}[theorem]{Example}
\newcommand{\eval}{\text{eval}}
\newcommand{\betti}{\operatorname{Betti}}
\begin{document}

\title[Full atomic support]{Betti elements and full atomic support in rings and monoids}

	\author[S.~T.~Chapman]{Scott T. Chapman \,\orcidlink{0000-0003-1205-6899}}
\address{Department of Mathematics and Statistics\\Sam Houston State University\\Huntsville, TX 77341}
\email{scott.chapman@shsu.edu}
	
	\author[P.~Garc\'a-S\'anchez]{Pedro Garc\'ia-S\'anchez \,\orcidlink{0000-0003-2330-9871}}
	\address{Departamento de Álgebra, Universidad de Granada, Granada, Spain}
	\email{pedro@ugr.es}

	\author[C.~O'Neill]{Christopher O'Neill \,\orcidlink{0000-0001-7505-8184}}
	\address{Department of Mathematics and Statistics, San Diego State University, San Diego, California 92182-7720}
	\email{cdoneill@sdsu.edu}

	\author[V.~Ponomarenko]{Vadim Ponomarenko \,\orcidlink{0000-0002-1174-3366}}
	\address{Department of Mathematics and Statistics, San Diego State University, San Diego, California 92182-7720}
	\email{vadim123@gmail.com}

\date{\today}

\begin{abstract}
Several papers in the recent literature have studied factorization properties of affine monoids using the monoid's Betti elements.  In this paper, we extend this study to more general rings and monoids.  We open by demonstrating the issues with computing the complete set of Betti elements of a general commutative cancellative monoid, and as an example compute this set for an algebraic number ring of class number two.  We specialize our study to the case where the monoid has a single Betti element, before examining monoids with full atomic support (that is, when each Betti element is divisible by every atom).  For such a monoid, we show that the catenary degree, tame degree, and omega value agree and can be computed using the monoid's set of Betti elements.  We close by considering Betti elements in block monoids, giving a "Carlitz-like" characterization of block monoids with full atomic support and proving that these are precisely the block monoids having a unique Betti element. 	 
\end{abstract}


\maketitle

\section{Introduction}

The study of various properties related to nonunique factorizations of elements in commutative rings and monoids has been an active area of research in algebra, combinatorics, and number theory over the past 40 years.  One specific area of interest has been the computation of various combinatorial constants which help describe the nonunique arithmetic of many classes of integral domains and monoids.  In \cite{c-t}, the authors show that several of these constants can be computed (and in some cases are even equal) on a monoid $S$ by only considering the Betti elements of $S$.  For many affine monoids (in particular, numerical monoids) this made the computation of constants such as the catenary and tame degrees into a finite time problem.  Hence, over the last decade several papers (see for example 
\cite{omega,delta-bf,measure-w,CJMM,isolated,half-factorial,single,ph}) consider issues where Betti elements play a key role.  Most of these papers focus on the study of Betti elements in affine or finitely generated commutative monoids.  In this paper, we generalize this study and consider Betti elements in various types of rings and monoids.  We investigate their structure and properties, as well as  their use for determining factorizations properties of their base structures.  In particular, we consider monoids $S$ where each Betti element is divisible by each atom of $S$ and denote these monoids as having full atomic support.  We show the relationship between full atomic support and two properties explored in the recent literature:\ the single Betti element case, and the length factorial case, in the diagram below.  We also offer examples that illustrate that these implications cannot be reversed.  

\begin{equation}\label{mainresult}
\mbox{\begin{tabular}{ccccccc}
$S$ is length factorial 
& $\Longrightarrow$ & 
\begin{tabular}{c}
$S$ has at most \\ one Betti element
\end{tabular}
& $\Longrightarrow$ & 
\begin{tabular}{c}
$S$ has full atomic support.
\end{tabular} 
\end{tabular}}
\end{equation}

We begin in Section~\ref{sec:basic} by offering the necessary background and definitions.  We argue (in Proposition~\ref{betti-half-factorial}) that a monoid $S$ is half-factorial if and only if each Betti element of $S$ is half-factorial.  In Section~\ref{sec:examples}, we offer many examples of the computation of the complete set of Betti elements in various types of monoids.  We offer particular focus on block monoids and algebraic number rings of class number two.  In Section~\ref{sec:one-betti}, we focus on monoids that admit only one Betti element.  
We argue (in Lemma~\ref{cor:atom-divides-single-betti}) that the Betti element in such a monoid determines a partition of the set of atoms of $S$.  In Theorem~\ref{char1BE}, we characterize monoids with one Betti element $b$ via the multiplicity of the atoms that appear in various irreducible factorizations of $b$ (we define this multiplicity as the \textit{multiplicative shadow} of $S$).  
We show in Theorem~\ref{anymultshadow} that any possible multiset of positive integers appears as the multiplicative shadow of a monoid with one Betti element.  Section~\ref{sec:one-betti} closes (in Theorem~\ref{lfobe}) by showing that a length factorial monoid is merely a monoid with a single Betti element that has exactly two irreducible factorizations which have differing length.  

In Section~\ref{sec:full-as}, we explore the notion of \textit{full atomic support}.  An element $x\in S$ has full atomic support if it is divisible by each atom of $S$; we say that $S$ has \textit{full atomic support} if each Betti element of $S$ has full atomic support.  
In Proposition~\ref{prop:fr-bm}, we show that the Betti elements in a monoid with full atomic support are incomparable under the divisibility order.  This eventually leads to an argument (Lemma~\ref{lem:c-fr}, Lemma~\ref{lem:fr-ZaS}, and Theorem~\ref{th:c=t=w-full-rank}) showing that in a monoid $S$ with full atomic support, the catenary degree, the tame degree, and the omega values are all equal and can be computed using the set of Betti elements of $S$.  Section~\ref{sec:block} looks more closely at the set of Betti elements of certain block monoids.  Using some technical constructions grounded in Lemma~\ref{full-rank-finite}, Lemma~\ref{lem:blockmonoid}, and Theorem~\ref{thm:blockmonoid}, we offer in Corollary~\ref{cor:blockmonoid} a ``Carlitz-like'' characterization of block monoids with full atomic support.


\section{Basic facts and definitions}
\label{sec:basic}

Let $\mathbb{N}$ represent the positive integers, $\mathbb{N}_0$ the nonnegative integers, and $(S,+)$ a commutative monoid.  We say $S$ is \emph{cancellative} if $a+b=a+c$ implies $b=c$ for all $a, b, c, \in S$, and that $S$ is \emph{reduced} if the only unit in $S$ is its identity element.  We say that $a \in S$ is an \textit{atom} if $a = b+c$ for some $b,c \in S$ implies $b$ or $c$ is a unit, and we say that $S$ is \textit{atomic} if every nonzero nonunit element of $S$ can be written as a sum of atoms.  
Suppose $S$ is cancellative and reduced.  Given $a,b\in S$, we write $a\le_S b$ and say ``$a$ divides $b$ in $S$'' if there exists $c\in S$ such that $a+c=b$ (equivalently, $b\in a+S$). The binary relation $\le_S$, called the \emph{divisibility order} on $S$, is a partial order on $S$ (reflexive since $0 \in S$, transitive since $S$ is a monoid, and anti-symmetric since $S$ is cancellative and reduced). We will write $a<_S b$ when $a\le_S b$ and $a\neq b$.  

In the study of non-unique factorization, it is common to restrict to the case of reduced monoids (see the discussion after \cite[Definition~1.2.6]{GHKb}). 
Hence, throughout the remainder of this manuscript, unless otherwise stated, all monoids are assumed to be atomic, cancellative, commutative, and reduced.  Additionally, we restrict our attention to monoids satisfying the ascending chain condition on principal ideals (ACCP), that is, its divisibility order does not contain infinite descending chains (we will see in Example~\ref{e:nonaccpi} the motivation for this assumption).

If $\mathcal{A}(S)$ is the set of atoms of $S$, 
then we know that $\langle \mathcal{A}(S)\rangle = S$. 
We say that $a\in \mathcal A(S)$ is \textit{prime} if whenever $a\le_S b+c$ for some $b,c\in S$, either $a\le_S b$ or $a\le_S c$. 
Let $\mathcal{F}(\mathcal{A}(S))$ be the free (commutative) monoid over $\mathcal{A}(S)$, which we can identify with $\mathbb{N}_0^{(\mathcal{A}(S))}$, and
\begin{equation}\label{eq:fact-hom}
\varphi: \mathcal{F}(\mathcal{A}(S)) \to S, a \in \mathcal{A} \mapsto a,
\end{equation}
be the factorization morphism of $S$. 
For $s\in S$, denote by $\mathsf{Z}(s)$ the set of factorizations of $s$, that is, $\mathsf{Z}(s)=\varphi^{-1}(s)$.  If $z\in\mathsf{Z}(s)$ with
$z=n_1a_1 + \cdots + n_ka_k$ for distinct atoms $a_i$ and natural numbers $n_j$, then we call $|z|= n_1+\cdots+n_k$ the \textit{length} of $z$.
 
Let $P$ be the set of prime elements of $S$, and let $T=S\setminus (\bigcup_{a\in P} a+S)$. By \cite[Theorem~1.2.3]{GHKb}, $T$ is a submonoid of $S$ and $S$ is isomorphic to $\mathcal{F}(P)\times T$. 
If $N=\mathcal{A}(S)\setminus P$, then $N\subseteq T$. If~$s\in T$, then $s=\sum_{a\in \mathcal{A}(S)} s_a a$, and as $s\not \in \bigcup_{a\in P} a+S$, we deduce that $s_a=0$ for all $a\in P$. Thus, $s\in \langle N\rangle$. This proves that $T=\langle N\rangle$, that is, 
\[S\simeq \mathcal{F}(P)\times \langle \mathcal{A}(S)\setminus P\rangle.\]
Thus, the study non-unique factorizations focuses on $\langle \mathcal{A}(S)\setminus P\rangle$ and, without loss, we can normally assume that our monoid $S$ does not have prime elements.

The kernel $\ker(\varphi)$ is an equivalence relation $\sim$ on $\mathcal{F}(\mathcal{A}(S))$ with $z \sim z'$ whenever $\varphi(z) = \varphi(z')$.  That is, whenever $z$ and $z'$ are factorizations of the same element of $S$.  In fact, $\ker(\varphi)$ is a \emph{congruence}, meaning $z \sim z'$ implies $z + z'' \sim z' + z''$ for all $z, z', z'' \in \mathcal{F}(\mathcal{A}(S))$.  
This ensures the set $\mathcal{F}(\mathcal{A}(S))/\ker(\varphi)$ of equivalence classes of $\ker(\varphi)$ inherits a monoid operation from $\mathcal{F}(\mathcal{A}(S))$, and is naturally isomorphic to $S$.  
A \emph{presentation} of $S$ is a system of generators of $\ker(\varphi)$ (as a congruence). A presentation $\sigma$ is \emph{minimal} if no proper subset of $\sigma$ generates $\ker(\varphi)$.

We define the \emph{atomic support} of $s \in S$, written $\operatorname{AS}(s)=\{ a\in \mathcal{A}(S) : s\in a+S\}$, as the set of atoms which divide $s$. If $x$ is an atom, then $\operatorname{AS}(x)=\{x\}$. For $z\in \mathsf{Z}(s)$, we similarly define its \emph{support} as $\operatorname{supp}(z)=\{a\in \mathcal{A}(S):z_a\neq 0\}$.  Note that $\operatorname{AS}(s)=\bigcup_{z\in\mathsf{Z}(s)}\operatorname{supp}(z)$. Unlike $\operatorname{AS}(s)$, observe that $\operatorname{supp}(y+z)=\operatorname{supp}(y)\cup\operatorname{supp}(z)$, and in particular that $\operatorname{supp}(z)=\operatorname{supp}(k z)$ for every positive integer $k$. 

An element $s\in S$ is said to be a \emph{Betti element} if there exists a partition $\operatorname{AS}(s)=A_1\cup \dots \cup A_n$, with $n$ an integer greater than one, such that for any factorization $z$ of $s$, there exists $i\in \{1,\dots,n\}$ so that $\operatorname{supp}(s)\subseteq A_i$. The set of the Betti elements of $S$ is denoted by $\betti(S)$.   

Betti elements can also be viewed using graph theory.  For $s\in S$, let $\nabla_s$ be the graph with vertices $\mathsf{Z}(s)$, and whose edges are the pairs $(x, y)$ such that $x\cdot y \neq 0$ (that is, edges join factorizations having atoms in common; here $x\cdot y$ denotes the dot product of $x$ and $y$). The connected components of $\nabla_s$ are called the $\mathcal{R}$-classes of $s$. Observe that the Betti elements of $S$ are precisely those elements with at least two $\mathcal{R}$-classes.  
Notice that if $S$ has finitely many atoms ($S$ is \emph{finitely generated}), then $S$ admits a finite presentation by R\'edei's Theorem \cite{redei}, and thus $S$ has finitely many Betti elements.

Betti elements and their $\mathcal{R}$-classes can be used to characterize the minimal presentations of~$S$, and, thus, they are widely studied in the theory of commutative monoids (see for instance \cite{acpi, delta-bf, single, Her}). 
According to \cite{acpi}, if $S$ satisfies the ACCP, a minimal presentation of $S$ can be constructed from its Betti elements as follows. 
Let $(X_i,Y_i)$, $i \in I$ be the edges of a tree whose vertices are the connected components of $\nabla_b$.
Pick $x_i \in X_i$ and $y_i \in Y_i$ for all $i \in I$ and set $\rho^{(b)} = \{(x_i , y_i ) : i \in I \}$. Then, $\rho = \bigcup_{b\in\betti(S)} \rho^{(b)}$ is a minimal
presentation of $S$ (see \cite{isolated} for more details) and 
all minimal presentations have this form. 
Note that the ACCP is critical for this process to yield a presentation; see Example~\ref{e:nonaccpi} for a demonstration of what can occur in its absence.  

We say a non-unit $s\in S$ in an atomic monoid $S$ is \emph{half-factorial} if all factorizations of $s$ have equal length, and that $S$ is \emph{half-factorial} if every $s\in S$ is half-factorial \cite[Section~6.7]{GHKb}.
The following result can be seen as a non-finitely generated version of \cite[Proposition~22]{elasticity-atomic} and \cite[Lemma~1.2]{half-factorial}. For monoids in which every element has a finite set of distinct factorization lengths, the statement follows from \cite[Theorem~2.5]{delta-bf}.

\begin{proposition}\label{betti-half-factorial} 
A monoid $S$ is half-factorial if and only if every Betti element of $S$ is half-factorial.    
\end{proposition}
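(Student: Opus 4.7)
The forward direction is immediate from the definition: if every element of $S$ has all its factorizations of a common length, then in particular every Betti element does.

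For the converse, the plan is to exploit the construction of a minimal presentation from the Betti elements, as recalled in the paragraph preceding the proposition. Assume every $b\in\betti(S)$ is half-factorial. For each $b\in\betti(S)$, let $\rho^{(b)}=\{(x_i^{(b)},y_i^{(b)}):i\in I_b\}$ be the set of pairs obtained by picking one factorization in each of the two $\mathcal R$-classes joined by an edge of a spanning tree on the connected components of $\nabla_b$. Because ACCP is assumed, the set $\rho=\bigcup_{b\in\betti(S)}\rho^{(b)}$ is a (minimal) presentation of $S$. The crucial observation is that every pair $(x,y)\in\rho$ satisfies $\varphi(x)=\varphi(y)=b$ for some Betti element $b$; since $b$ is half-factorial by hypothesis, this forces $|x|=|y|$.

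Now let $s\in S$ and let $z,z'\in\mathsf Z(s)$. Because $\rho$ generates $\ker(\varphi)$ as a congruence, there is a finite chain $z=z_0,z_1,\dots,z_n=z'$ in $\mathcal F(\mathcal A(S))$ such that for each $j$, there exist $(x,y)\in\rho$ (possibly swapped) and $w_j\in\mathcal F(\mathcal A(S))$ with $z_j=x+w_j$ and $z_{j+1}=y+w_j$. By the previous paragraph, $|x|=|y|$, so $|z_j|=|x|+|w_j|=|y|+|w_j|=|z_{j+1}|$. Telescoping gives $|z|=|z'|$, so $s$ is half-factorial. Since $s$ was arbitrary, $S$ is half-factorial.

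The main point to be careful about is that $S$ need not be finitely generated, so each $\betti(S)$ may be infinite and each $\nabla_b$ may have infinitely many connected components; nevertheless the tree-based construction of $\rho^{(b)}$ still produces a (possibly infinite) set of generators of $\ker(\varphi)$ as a congruence under ACCP, which is exactly what the chain-of-rewrites argument above requires. Once this is granted, the length-preservation step is elementary and length additivity under $+$ does the rest.
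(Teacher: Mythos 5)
Your proposal is correct and follows essentially the same route as the paper: both arguments reduce to the fact (valid under ACCP, via \cite[Theorem~1]{acpi} and the tree-based construction of a minimal presentation from Betti elements) that any two factorizations of an element are linked by a chain of rewrites $z_j = x + w_j \mapsto y + w_j = z_{j+1}$ with $\varphi(x)=\varphi(y)$ a Betti element, whence half-factoriality of Betti elements forces $|x|=|y|$ and lengths telescope. Your explicit remark about the non-finitely-generated case is a nice point of care but adds nothing beyond what the cited presentation result already guarantees.
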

\begin{proof} 
One direction is trivial.  For the other, assume that every element of $\betti(S)$ is half-factorial.  Let $s\neq 0$ be in $S$ and let $u,v\in \mathsf{Z}(s)$. 
By \cite[Theorem~1]{acpi} there is some chain of factorizations $u=z_1, z_2, \ldots, z_t=v$, where $(z_i,z_{i+1})=(a_i+c_i,b_i+c_i)$ and $\varphi(a_i)=\varphi(b_i)\in \betti(S)$, where $\varphi$ is defined as in \eqref{eq:fact-hom}. For each $i$, we have $|z_i|=|a_i|+|c_i|=|b_i|+|c_i|=|z_{i+1}|$, since $\phi(a_i)$ is half-factorial. Hence $|u|=|v|$.
\end{proof}


\section{Motivating examples}
\label{sec:examples}

In this section, we present several families of monoids that will be used throughout the manuscript.  

\begin{example} Suppose $n_1<n_2< \dots < n_k$ are positive integers, with $\gcd(n_1,\dots,n_k)=1$.
The monoid 
\[
S
= \langle n_1, n_2, \dots , n_k\rangle
= \{z_1n_1 + \cdots + z_kn_k : z_1, \ldots, z_k \in \mathbb N_0\}
\subseteq (\mathbb N_0, +)
\]
is called a \emph{numerical monoid} whose atoms are $n_1, n_2, \dots, n_k$. The \emph{embedding dimension} of $S$ is $\operatorname{e}(S)=k \geq 2$ and its \emph{multiplicity} is $\operatorname{m}(S)=n_1$.  As described above, the computation of the Betti elements of $S$ is closely related to the computation of a minimal presentation of $S$. By \cite[Corollary 8.27]{ns}, the size of such a minimal presentation is at most
\begin{equation}\label{upperbnd}
\tfrac{1}{2}\operatorname{m}(S)(\operatorname{m}(S)-1)
\end{equation}
and hence, this figure is also an upper bound on the number of Betti elements of $S$.  If $S$ is of maximal embedding dimension (that is, if $\operatorname{e}(S) = \operatorname{m}(S)$), then the upper bound \eqref{upperbnd} for the size of the minimal presentation is attained (see \cite[Lemma 8.29]{ns}).  Thus, for such an $S$, any Betti element $b$ is the sum of two atoms of $S$ (see \cite[Proposition 8.19]{ns}).  For instance, if $t\geq 2$, then 
\[
S = \langle t, t+1, \ldots , 2t-1\rangle
\qquad \text{has} \qquad
\betti(S)=\{2t, 2t + 1, \ldots , 4t-2\}.
\]

We note the special case when $k=3$ above (that is, $S=\langle n_1, n_2, n_3\rangle$). By a fundamental result of Herzog \cite{Her}, the cardinality of a minimal presentation in this case is at most three and hence, such an $S$ has at most three Betti elements. We look at conditions which determine each possibility.
\begin{enumerate} 
\item \textbf{One Betti Element:} Here $S=\langle k_1k_2, k_1k_3, k_2k_3\rangle$ and $\betti(S)= \{k_1k_2k_3\}$, where $k_1, k_2, k_3$ are pairwise relatively prime positive integers (see \cite[Example 12]{single}).
\item \textbf{Two Betti Elements:} In this case, $S = \langle am_1, am_2, bm_1 + cm_2\rangle$ where $1< m_1 < m_2$ are relatively prime integers, 
$a, b$, and $c$ are nonnegative integers with $a\geq 2$, $b + c \geq 2$
and $\gcd(a, bm_1 + cm_2) = 1$.  Here $S$ is symmetric and $\betti(S)=\{a(bm_1 + cm_2), am_1m_2\}$ (see \cite[Theorem 10.6]{ns} and \cite[Theorem 6]{GSMC}).
\item \textbf{Three Betti Elements:} Here $S$ is nonsymmetric and the generators $n_1, n_2$, and $n_3$ entail all possibilities not covered in the first two cases.  
\end{enumerate}
\end{example}

\begin{example}  As demonstrated above, the question of finding the set
$\betti(S)$ for a monoid $S$ is largely a computational problem.  There are recent cases in the literature where this set is precisely described for particular monoids.  A \textit{positive monoid} $S$ is a submonoid of the additive monoid of nonnegative real numbers. A positive monoid consisting entirely of rationals is known as a \textit{Puiseux monoid}. 
Let $q$ be a non-integer positive rational such that $q^{-1}\not\in\mathbb{N}$, and let $\mathsf{n}(q)$ and $\mathsf{d}(q)$ represent respectively the numerator and denominator of $q$ when written in lowest terms.
Let $S_q := \langle q^n : n \in \mathbb{N}_0 \rangle$ be the Puiseux monoid generated by the powers of $q$ (see \cite[Example 4.6]{CJMM}).  By \cite[Theorem~6.2]{GG18} and 
\cite[Theorem~4.2]{CG22}, $S_q$ is atomic and $\mathcal{A}(S_q) = \{q^n : n \in \mathbb{N}_0\}$. It follows from \cite[Lemma~4.3]{ABLST} that 
\[
\text{Betti}(S_q) = \left\{ \mathsf{n}(q) q^n : n \in \mathbb{N}_0 \right\}.
\]

Let $\mathbb{N}_0[X]$ represent the semiring of polynomials with nonnegative integer coefficients.  If $\alpha$ is a positive real number, then let $\mathbb{N}_0[\alpha] =\{p(\alpha) :  p(X)\in \mathbb{N}_0[X]\}$. It follows that $\mathbb{N}_0[\alpha]$ is a positive monoid generated over $\mathbb{N}_0$ by the powers of $\alpha$.  If $q$ is a positive rational that is neither an integer nor the reciprocal of an integer, and $\alpha=\sqrt[n]{q}$ a positive irreducible $n$th root of $q$, then Proposition 4.4 in \cite{ABLST} shows that 
\[
\betti(\mathbb{N}_0[\alpha]) = \left\{ \frac{\mathsf{n}(q)^{m+1}}{\mathsf{d}(q)^{m}}\cdot \alpha^r :  m\in\mathbb{N}_0\mbox{  and  } r\in\{0,1,\ldots ,n-1\} \right\}.
\]
\end{example}

\begin{example}\label{lfex}  Let $G$ be an abelian group and let $\mathcal{F}(G)$ denote the free abelian monoid on $G$.
Define $\eval : \mathcal{F}(G) \to G$ by
\[
\eval\left(\prod_{g \in G} g^{\alpha_g}\right) = \sum_{g \in G} \alpha_gg,
\]
where the addition takes place in $G$. The set
\[
\mathcal{B}(G) = \left\{x \in \mathcal{F}(G) : \eval(x) = 0 \right\}
\]
is a submonoid of $\mathcal{F}(G)$ called the {\it block monoid} on the abelian group $G$.  Block monoids play a key role in the theory of non-unique factorizations (see \cite[Sections 2.5, 3.4]{GHKb}).  If $G$ is finite, then $\mathcal{B}(G)$ is an affine monoid and for relatively small $G$, the computation of their Betti Elements can be handled using the GAP \cite{gap} package numericalsgps \cite{numericalsgps}.  We note that both $\mathcal{B}(\{e\})$ and $\mathcal{B}(\mathbb{Z}_2)$ are factorial monoids and admit no Betti elements.  We list the results for $\mathcal{B}(\mathbb{Z}_3)$, $\mathcal{B}(\mathbb{Z}_2\oplus \mathbb{Z}_2)$, and $\mathcal{B}(\mathbb{Z}_4)$ in the following table. 
\medskip

\[
\begin{array}{c|c|c|c}
G & \text{Defining equations of } \mathcal{B}(G)  & \text{Minimal Generators for} & \text{Betti Elements} \\
& \text{as an affine monoid in } \mathbb{N}_0^k & \mathcal{B}(G) & \text{of } \mathcal{B}(G) \\[5pt] \hline  \rule{0pt}{1.2\normalbaselineskip}
\mathbb{Z}_3 & x_1+2x_2\equiv 0 \pmod{3} & (0,3), (3,0), (1,1) & (3,3) \\[5pt] 
\rule{0pt}{1.2\normalbaselineskip}
\mathbb{Z}_2\oplus \mathbb{Z}_2 & x_1+ x_3\equiv 0\pmod{2}, 
& (2,0,0),(0,2,0), & 
(2,2,2)\\
 &  x_2+x_3\equiv 0\pmod{2} & (0,0,2),(1,1,1) & \\[5pt] 
 \rule{0pt}{1.2\normalbaselineskip}
\mathbb{Z}_4 & x_1+2x_2+3x_3\equiv 0 \pmod{4}
 & (0, 0, 4 ), ( 0, 1, 2 ),  & 
(0, 2, 4), (2, 1, 4),  \\
& & ( 1, 0, 1 ), ( 2, 1, 0 ), & (4, 0, 4), (4, 1, 2), \\
& & (0,2,0),(4,0,0) &  (2,2,2), (4,2,0) 
\end{array}
\]
\end{example}

As one might expect, the number of Betti elements of $\mathcal{B}(G)$ increases rapidly with the size of $G$ (GAP shows that $\mathcal{B}(\mathbb{Z}_2^3)$ admits 99 Betti elements). In fact, examples obtained using GAP lead to the following conjecture. 
    
\begin{conjecture}\label{vadimconj} Let $G$ and $G'$ be finite abelian groups of order greater than 2.  Apart from $\mathbb{Z}_3$ and $\mathbb{Z}_2\oplus \mathbb{Z}_2$ above, if $G\not\cong G'$, then the number of Betti elements of $\mathcal{B}(G)$ and $\mathcal{B}(G')$ are different.
\end{conjecture}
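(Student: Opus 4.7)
The plan is to extract $|\betti(\mathcal{B}(G))|$ as an explicit function of the structural invariants of $G$ (such as order, exponent, rank, and elementary divisors) and then argue that this function is injective on isomorphism classes, apart from the exceptional coincidence between $\mathbb{Z}_3$ and $\mathbb{Z}_2\oplus\mathbb{Z}_2$ already observed in the table. First I would reformulate Betti elements of $\mathcal{B}(G)$ combinatorially: the atoms of $\mathcal{B}(G)$ are the minimal zero-sum sequences over $G$, and $B\in \mathcal{B}(G)$ is Betti precisely when $\nabla_B$ is disconnected, i.e.\ when $B$ admits two factorizations into atoms that share no atom. Using the natural $\operatorname{Aut}(G)$-action on $\mathcal{B}(G)$, I would enumerate Betti elements up to symmetry as pairs of disjoint atomic decompositions of a common zero-sum sequence.

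Next I would establish growth bounds. For cyclic $G=\mathbb{Z}_n$, the sequences $g^a\cdot (-g)^a$ with $a\mid n$, $1<a<n$, give a family of Betti elements counted by a divisor sum of $n$, and ``triangular'' relations involving three or more distinct group elements should contribute additional Betti elements growing roughly like $n^2$; the Davenport constant $D(G)$ caps the atom length and thus bounds the counts from above. For $G$ of rank $r$, coordinate embeddings of the cyclic factors together with genuinely ``mixed'' minimal zero-sums should produce polynomial growth whose leading term depends on $r$, which in principle would separate cyclic from non-cyclic groups of equal order for $|G|$ sufficiently large. A finite GAP verification would then handle the small cases that escape the asymptotic argument.

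The main obstacle is distinguishing non-isomorphic groups of the \emph{same} order, particularly pairs like $\mathbb{Z}_{p^2}$ versus $\mathbb{Z}_p\oplus \mathbb{Z}_p$: both admit rich symmetric families of atoms, and although their atom-length distributions and Davenport constants differ, converting these structural differences into a precise count of Betti elements requires a delicate accounting of when two atomic factorizations share a common atom. A cleaner but more ambitious route would be to show that the isomorphism type of $G$ can be reconstructed from finer data attached to $\betti(\mathcal{B}(G))$ (for instance, the multiset of atom lengths appearing in Betti elements), with the numerical statement of the conjecture then following as a corollary. Proving such a reconstruction result is where I expect the real difficulty to lie, and it is plausibly why the statement has so far only been formulated as a conjecture rather than a theorem.
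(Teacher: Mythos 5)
This statement is posed in the paper only as a conjecture, supported by GAP computations; the paper contains no proof of it, so there is nothing to compare your argument against. More importantly, what you have written is not a proof but a research plan, and you candidly identify its unresolved core yourself: you never actually produce a formula or even an effective lower/upper bound for $|\betti(\mathcal{B}(G))|$ in terms of the invariants of $G$, and you never establish injectivity of that (hypothetical) function on isomorphism classes. The claims that ``triangular'' relations contribute roughly $n^2$ Betti elements for $\mathbb{Z}_n$ and that the leading term of a polynomial count depends on the rank $r$ are asserted, not derived; without them the separation of cyclic from non-cyclic groups, let alone of $\mathbb{Z}_{p^2}$ from $\mathbb{Z}_p\oplus\mathbb{Z}_p$, does not follow. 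The suggestion that ``a finite GAP verification would handle the small cases'' also cannot close the argument as stated, because for any asymptotic separation by order there remain infinitely many pairs of non-isomorphic groups of equal order (one pair for each order with more than one abelian group of that order), so the residual case analysis is not finite unless your structural invariants already distinguish all same-order pairs --- which is exactly the step you concede is missing.

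A concrete obstacle worth flagging: counting Betti elements requires deciding, for each zero-sum sequence $B$, whether the factorization graph $\nabla_B$ is disconnected, and this depends on delicate incidence data among minimal zero-sum sequences (which factorizations share an atom), not merely on the multiset of atom lengths or the Davenport constant. The paper's own computation for $\mathcal{B}(\mathbb{Z}_4)$ already shows eight Betti elements of varied shapes, and $\mathcal{B}(\mathbb{Z}_2^3)$ has 99; no closed form is known even for cyclic groups. Your proposed ``reconstruction'' route (recovering $G$ from finer data attached to $\betti(\mathcal{B}(G))$) is a reasonable research direction, but as you note it is precisely where the difficulty lies, so the proposal should be regarded as an outline of open problems rather than a proof.
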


Given $s \in S$, we say that a factorization $x \in \mathsf{Z}(s)$ is \emph{isolated} if it is disjoint with every other factorization
of $s$, that is, the $\mathcal{R}$-class of $x$ in $\nabla_s$ is a singleton. It easily follows that if $s$ is an element of $S$ having an isolated factorization, then either $\mathsf{Z}(s)$ is a singleton, or $s$ is a Betti element. Isolated factorizations will play a central role in many of the key results throughout the remainder of this manuscript.

\begin{example}  Let $D$ be the ring of integers in a finite extension of the rationals with class number two. The ring $D$ is a well-known half-factorial domain \cite{car}, and we compute its complete set of Betti elements.  Note that if an element $x\in D$ is divisible by a prime element, then its factorization graph is connected, and hence $x$ is not a Betti element.  

So let $x$ be a nonzero non-unit of $D$ that is not divisible by a prime and $(x)$ the principal ideal it generates in $D$.  As $D$ is a Dedekind domain, the ideals of $D$ factor uniquely as a product of prime ideals.  Hence, we have in $D$ that $(x)=P_1\cdots P_k$ where the $P_i$'s are non-principal prime ideals. Since the class number is 2, a nonprime irreducible $\delta$ of $D$ yields an ideal factorization into prime ideals of the form $(\delta)=P_1P_2$.  Hence the $k$ above must be even.  If $x$ is a Betti element, then we argue that $k=4$.

If $k=2$, then $x$ is irreducible and not a Betti element.  Hence, $k\geq 4$.
Suppose that $k>4$ and $x$ has more than one factorization, say 
\[
x=\alpha_1\cdots \alpha_t = \beta_1\cdots \beta_t
\]
where $t\geq 3$ (both factorizations have the same length because $D$ is half-factorial).  
If we set $z=\alpha_1\cdots \alpha_t$ and $z^\prime=\beta_1\cdots \beta_t$, then either $z$ and $z^\prime$ share an atom, or we construct a factorization $z^{\prime\prime}$ that shares atoms with $z$ and $z^\prime$. In either case, this will place them in the same connected component of $\nabla_x$.

Suppose $z$ and $z^\prime$ do not share an atom. 
Decompose the ideal $(x)$ into prime ideals as follows,
\[
(x)=(\alpha_1)\cdots (\alpha_t)= (P_{i_1}P_{i_2})\cdots (P_{i_{k-1}}P_{i_k})
\] and 
\[
(x)=(\beta_1)\cdots (\beta_t)= (P_{j_1}P_{j_2})\cdots (P_{j_{k-1}}P_{j_k}).
\] 
Since $k\ge 6$, there must be some $s,t\notin \{i_1, i_2\}$ with $(P_{s}P_{t})=(\beta_j)$ for some $j$.  So $(P_{i_1}P_{i_2})(P_{s}P_{t})$ can be extended to another factorization of $(x)$ and hence to another factorization $z^{\prime\prime}$ of $x$. 
But now $z$ and $z^{\prime\prime}$ share the atom $\alpha_1$ (where $(\alpha_1)=(P_{i_1}P_{i_2})$), while $z^{\prime\prime}$ and $z^\prime$ share the atom $\gamma$ (where $(\gamma)=(P_{s}P_{t})$).  Thus, $\nabla_x$ is connected and $x$ is not a Betti element.

Hence $k\leq 4$ and is even.  As previously noted, $k>2$, hence $k=4$.  If $Q_1, Q_2, Q_3$, and $Q_4$ are now distinct nonprincipal prime ideals of $D$, we have several possibilities for $x$.
\begin{enumerate}
\item $(x)= Q_1^4$.  Here $x$ is the square of strong atom and has only one factorization.  Hence $x$ is not a Betti element.
\item $(x)=Q_1^3Q_2$.  Here $x = \gamma_1\gamma_2$, where $(\gamma_1)=Q_1^2$ and $(\gamma_2)=Q_1Q_2$, is the only irreducible factorization of $x$, and hence $x$ is not a Betti element.
\item $(x)=Q_1^2Q_2^2$. In this case, there are two factorizations,
\[
x=\delta_1\delta_2= \eta^2
\]
where $(\delta_1)=Q_1^2$, $(\delta_2)=Q_2^2$, and $(\eta)=Q_1Q_2$.  As these factorizations are isolated, $x$ is a Betti element.
\item $(x)=Q_1^2Q_2Q_3$.  If $Q_2Q_3=(\theta)$ and $Q_1Q_3=(\kappa)$,
then the only two irreducible factorizations of $x$ are
\[
x=\gamma_1\theta = \eta\kappa.
\]
As these are isolated, $x$ again is a Betti element.
\item $(x)=Q_1Q_2Q_3Q_4$.  Here we have three irreducible factorizations of $x$.  They are
\[
x= \eta\lambda = \kappa\mu = \sigma\theta,
\]
where $(\lambda)=Q_3Q_4$, $(\mu)=Q_2Q_4$, and $(\sigma)=Q_1Q_4$.  As these factorizations are all isolated, $x$ is again a Betti element.
\end{enumerate}

Options (3), (4), and (5) above yield the complete set of Betti elements of $D$.  Hence,
\begin{multline*}
\betti(D) = \{x :  (x) = Q_1^2Q_2^2\mbox{ where }Q_1, Q_2\mbox{ are distinct nonprincipal prime ideals}\}  \\
\cup \{x :  (x) = Q_1^2Q_2Q_3\mbox{ where }Q_1, Q_2, \mbox{ and }Q_3\mbox{ are distinct nonprincipal prime ideals}\} \\
 \cup  \{x :  (x) = Q_1Q_2Q_3Q_4\mbox{ where }Q_1, Q_2, Q_3 \mbox{ and }Q_4\mbox{ are distinct nonprincipal prime ideals}\}.
\end{multline*}
An argument very similar to this can be used to compute the complete set of Betti elements in any Krull domain with divisor class group $\mathbb{Z}_2$.  It also illustrates the difficulty in such a calculation should the divisor class group have large order.
\end{example}

We close this section with an example that demonstrates the necessity of assuming that our monoids satisfy the ACCP condition.

\begin{example}\label{e:nonaccpi}
Without the ACCP assumption, the relations occurring at Betti elements need not form a presentation.  
Indeed, consider the monoid $S = \langle x_0, x_1, \ldots, y_0, y_1, \ldots, A, B, C \rangle$, with relations
\[
\text{(a)} \  (2x_i, A + 2x_{i+1}),
\qquad
\text{(b)} \  (2y_i, A + 2y_{i+1}),
\qquad \text{and} \qquad
\text{(c)} \  (B + 2x_i, C + 2y_i)
\]
for $i \ge 0$.  Although the given relations occur at the elements
\[
b_i = 2 x_i = A + 2x_{i+1} = 2A + 2x_{i+2} = \cdots,
\qquad
b_i' = 2y_i = A + 2y_{i+1} = 2A + 2y_{i+2} = \cdots,
\]
and
\begin{align*}
b_i'' &= B + b_i = B + 2x_i = B + A + 2x_{i+1} = B + 2A + 2x_{i+2} = \cdots
\\
&= C + b_i' = C + 2y_i = C + A + 2y_{i+1} = C + 2A + 2y_{i+2} = \cdots,
\end{align*}
respectively, none of the $b_i''$ are Betti elements.  Indeed, the relations~(a) connect the first row of factorizations for $b_i''$ above, while the relations~(b) connect the factorizations in the second row.  However, since some factorizations in each row have $A$ in their atomic support, the factorization graph of $b_i''$ is connected, despite the fact that without the relations~(c), there is no way to connect from the first row of factorizations to the second row.  

The underlying issue is that $b_1, b_2, \ldots$ form an infinite descending chain in the divisibility order of $S$, thereby violating the ACCP.  As a result, some factorizations in the same $\mathcal{R}$-class need not be connected by a chain of relations occurring at Betti elements.  Note that the relations specified in~(a),~(b), and~(c) form a presentation for $M$, so the $b_i$ and $b_i'$ are the only Betti elements of $M$, and thus by the above argument, any presentation of $M$ contains some relation not occurring at a Betti element.  

In view of this example, we make explicit use of the ACCP in the proof of Lemma~\ref{atom-betti}(a), which is central to the rest of the paper, ensuring this phenomenon cannot occur in  monoids satisfying the ACCP.  
\end{example}


\section{Monoids with one Betti element}
\label{sec:one-betti}

We now begin to explore the case where the monoid $S$ contains exactly one Betti element. Finitely generated monoids with a single Betti element were studied in \cite{isolated}.  The next result is a technical lemma that will be used several times later in this manuscript. Compare the second statement with \cite[Lemma~3.12]{isolated}.

\begin{lemma}\label{atom-betti}
Let $S$ be a monoid. 
\begin{enumerate}
\item[(a)] \label{basic} Let $a$ be an atom of $S$. If $a$ is not a prime, then there exists a Betti element $b$ of $S$ such that $a\le_S b$.
\item[(b)] \label{two-betti} Suppose $b,c$ are both Betti elements and $b <_S c$.  Then there is some $z\in \mathsf{Z}(c)$ with $\operatorname{supp}(z)\cap \operatorname{AS}(b)=\emptyset$.  In particular, there must be some atom $a\in\mathcal{A}(S)$ with $a\le_S c$ and $a\not \le_S b$.
\end{enumerate}
\end{lemma}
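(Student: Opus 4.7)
The plan for (a) is to extract, via the ACCP, a minimal element witnessing the failure of primeness. Since $a$ is not prime, there exist $s,t\in S$ with $a\le_S s+t$ but $a\not\le_S s$ and $a\not\le_S t$; so $s+t$ is divisible by $a$ yet admits a factorization (concatenating factorizations of $s$ and $t$) whose support excludes $a$. I would let $\mathcal{U}\subseteq S$ denote the set of elements $u$ satisfying both $a\le_S u$ and $a\notin\operatorname{supp}(z)$ for some $z\in\mathsf{Z}(u)$. Then $\mathcal{U}$ is nonempty, and the ACCP allows me to choose $u\in\mathcal{U}$ minimal in $\le_S$. The claim is that this $u$ is a Betti element, which immediately yields the result.

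To prove the claim, suppose $u$ is not Betti; then $\nabla_u$ is connected, so some path in $\nabla_u$ joins a factorization of $u$ containing $a$ to one avoiding $a$. Somewhere along the path, a consecutive pair $(z_i,z_{i+1})$ switches the status of $a$, and by definition of the edges of $\nabla_u$ the two factorizations share some atom $a'$. Necessarily $a'\neq a$, since one of them avoids $a$. Subtracting $a'$ from both factorizations yields two factorizations of the element $u'\in S$ defined by $u=u'+a'$, one containing $a$ and one avoiding $a$; hence $u'\in\mathcal{U}$ and $u'<_S u$, contradicting minimality. The main obstacle here is precisely recognizing that ACCP, rather than atomicity alone, is essential: Example~\ref{e:nonaccpi} shows that without a well-founded divisibility order no minimal such $u$ need exist, so the whole argument collapses.

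The plan for (b) is to leverage the disjointness of the partition of $\operatorname{AS}(c)$. Write $c=b+d$ with $d\neq 0$ and fix any $z_d\in\mathsf{Z}(d)$; since $d$ is a nonzero non-unit in a reduced atomic monoid, $\operatorname{supp}(z_d)\neq\emptyset$. Let $\operatorname{AS}(c)=B_1\cup\cdots\cup B_m$ be a partition witnessing the Betti property of $c$. For any $y\in\mathsf{Z}(b)$, the factorization $y+z_d\in\mathsf{Z}(c)$ has support contained in some $B_k$, and the index $k$ cannot depend on $y$ because $\operatorname{supp}(z_d)\subseteq B_k$ and the $B_j$'s are pairwise disjoint. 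Therefore $\operatorname{supp}(y)\subseteq B_k$ for every $y\in\mathsf{Z}(b)$, and taking the union over $y$ gives $\operatorname{AS}(b)\subseteq B_k$. Since $m\ge 2$, choose $j\neq k$ and any $z\in\mathsf{Z}(c)$ with $\operatorname{supp}(z)\subseteq B_j$ (such a $z$ exists because $B_j\neq\emptyset$); then $\operatorname{supp}(z)\cap\operatorname{AS}(b)\subseteq B_j\cap B_k=\emptyset$, and any atom in $\operatorname{supp}(z)$ realizes the stated ``in particular'' conclusion. The Betti partition of $b$ is actually not needed in this direction—only the partition of $c$ does the work.
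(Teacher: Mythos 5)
Your proof is correct, and part (a) takes a genuinely different route from the paper's. The paper proves (a) by invoking the chain theorem of \cite[Theorem~1]{acpi}: starting from the non-primality witness $x+y$, it connects a factorization containing $a$ to one avoiding $a$ by a chain of steps each occurring at a Betti element, and then locates the step at which $a$ disappears from the support. You instead run a minimal-counterexample argument: ACCP makes $\le_S$ well-founded, so the set $\mathcal{U}$ of elements divisible by $a$ yet admitting an $a$-free factorization has a $\le_S$-minimal element $u$, and if $\nabla_u$ were connected you could strip a shared atom $a'\neq a$ from the two factorizations at the edge where the status of $a$ switches and descend to a smaller element of $\mathcal{U}$. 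This is more elementary and self-contained --- it in effect re-proves exactly the special case of the cited chain theorem that is needed, and it isolates cleanly why ACCP (not atomicity alone) is the right hypothesis --- at the cost of not exhibiting the Betti element as the value of a relation in a minimal presentation, which the paper's phrasing gives for free and reuses in later arguments. The only point worth making explicit in your write-up is that the minimal $u$ does have a factorization with $a$ in its support (since $a\le_S u$) distinct from the $a$-free one, so $\nabla_u$ genuinely has two vertices to connect. For part (b) your argument is essentially the paper's, reorganized: the paper assumes every $z\in\mathsf{Z}(c)$ meets $\operatorname{AS}(b)$ and derives connectivity of $\nabla_c$ via the chain $z_1$, $z_1'+z_0$, $z_2'+z_0$, $z_2$, whereas you argue directly that all factorizations $y+z_d$ contain the nonempty set $\operatorname{supp}(z_d)$ and hence force $\operatorname{AS}(b)$ into a single block of the partition witnessing that $c$ is Betti, after which any factorization supported in another block does the job; the pivotal observation that $\operatorname{supp}(z_d)$ pins down the block (equivalently, the $\mathcal{R}$-class) is the same in both, and your closing remark that only the Betti property of $c$, not of $b$, is used is accurate and applies equally to the paper's proof.
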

\begin{proof}
    (a) Since $a$ is not prime, there exists $x,y\in S$ such that $a\le_S x+y$ and neither $a\le_S x$ nor $a\le_S y$. This means that $x+y\in a+S$, and consequently there exists a factorization $z$ of $x+y$ such that $a$ is in the support of $z$. As $x\not\in a+S$ and $y\not\in a+S$, the atom $a$ is not in the support of any of the factorizations of $x$ and $y$. Let $u$ and $v$ be factorizations of $x$ and $y$, respectively. Then, $u+v$ is a factorization of $x+y$. It follows that $(z,u+v)\in \ker(\varphi)$. Let $\sigma$ be a minimal presentation of $\ker(\varphi)$. There exists a chain of factorizations $z_1,\dots, z_t$ such that $z_1=z$, $z_t=u+v$, and $(z_i,z_{i+1})=(a_i+c_i,b_i+c_i)$ for some $(a_i,b_i)\in \mathcal{F}(\mathcal{A}(S))^2$ and $c_i\in \mathcal{F}(\mathcal{A}(S))$ such that either $(a_i,b_i)\in \sigma$ or $(b_i,a_i)\in \sigma$ (in particular $\varphi(a_i)=\varphi(b_i)\in \operatorname{Betti}(S)$, with $\varphi$ as in \eqref{eq:fact-hom}; see \cite[Theorem~1]{acpi}, which requires $S$ to satisfy the ACCP). As $a\in \operatorname{supp}(z)$ and $a\not\in \operatorname{supp}(u+v)$, there exists $i\in \{1,\dots,t\}$ such that $a\in \operatorname{supp}(z_i)$ and $a\not\in \operatorname{supp}(z_{i+1})$. This forces $a\not\in\operatorname{supp}(c_i)$, and so $a\in \operatorname{supp}(a_i)\cup \operatorname{supp}(b_i)$. In particular, $a\le_S\varphi(a_i)\in \operatorname{Betti}(S)$.

    (b)  Assume otherwise by way of contradiction.  Then, every element of $\mathsf{Z}(c)$ shares at least one atom with $\operatorname{AS}(b)$. Because $c$ is a Betti element, it must have $z_1, z_2\in \mathsf{Z}(c)$ with $z_1,z_2$ in different connected components of $\nabla_c$.  Now, let $z_1'\in \mathsf{Z}(b)$ such that $\operatorname{supp}(z_1)\cap \operatorname{supp}(z_1')\neq \emptyset$, and similarly let $z_2'\in \mathsf{Z}(b)$ such that $\operatorname{supp}(z_2)\cap \operatorname{supp}(z_2')\neq \emptyset$.  Because $b<_Sc$ there is some $d\in S$, $d\neq 0$, with $b+d=c$.  Choose any $z_0\in \mathsf{Z}(d)$.  Now we have a chain in $\mathsf{Z}(c)$ as: $(z_1) - (z_1'+z_0) - (z_2'+z_0) - (z_2)$, where each factorization shares at least one atom with the next.  Hence $z_1,z_2$ are in the same connected component of $\nabla_c$, a contradiction.
\end{proof}

Hence, by part (a) above, a monoid with a single Betti element has full atomic support, and the second implication in \eqref{mainresult} holds.  Moreover, part (a) also allows us to prove this further property regarding the atoms in the single Betti element case.  

\begin{lemma}\label{cor:atom-divides-single-betti}
Let $S$ be a monoid such that $\operatorname{Betti}(S)=\{b\}$, and suppose that $S$ contains no primes. Call the factorizations of $b$ by $\mathsf{Z}(b)=\{x_i 
: i \in I\}$.  Define $A_i=\operatorname{supp}(x_i)$. Then, $\{A_i\}_{i\in I}$ forms a partition of $\mathcal{A}(S)$.
\end{lemma}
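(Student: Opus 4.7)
The plan is to verify the two axioms of a partition --- coverage ($\bigcup_{i \in I} A_i = \mathcal{A}(S)$) and pairwise disjointness ($A_i \cap A_j = \emptyset$ for $i \ne j$) --- with nonemptiness of each $A_i$ being automatic since $x_i$ factors the nonzero element $b$. Coverage is immediate from Lemma~\ref{atom-betti}(a): for any atom $a \in \mathcal{A}(S)$, the no-primes hypothesis ensures $a$ is not prime, so that lemma furnishes a Betti element $b'$ with $a \le_S b'$; the assumption $\operatorname{Betti}(S) = \{b\}$ then forces $b' = b$, so $a \in \operatorname{AS}(b) = \bigcup_{i \in I} A_i$.

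For pairwise disjointness I would argue by contradiction: suppose distinct $x_i \ne x_j$ in $\mathsf{Z}(b)$ share an atom $a$. Writing $x_i = a + \tilde x_i$ and $x_j = a + \tilde x_j$, cancellativity gives two distinct factorizations $\tilde x_i \ne \tilde x_j$ of $b - a$. Since $b$ is a Betti element it is not an atom (every atom has a unique factorization), so $a \ne b$ and hence $b - a <_S b$. The plan is then to show that $b - a$ actually admits only one factorization, contradicting $\tilde x_i \ne \tilde x_j$. For this I would use the minimal-presentation description from the preliminaries: with $\operatorname{Betti}(S) = \{b\}$ and ACCP in force, a minimal presentation $\rho$ of $S$ can be taken with every pair $(\alpha, \beta) \in \rho$ satisfying $\varphi(\alpha) = \varphi(\beta) = b$. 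Applying such a pair to a free-monoid element $w$ requires writing $w = \alpha + c$, forcing $\varphi(w) = b + \varphi(c) \ge_S b$; but $\varphi(\tilde x_i) = b - a \not\ge_S b$, so no $\rho$-step can start from any factorization of $b - a$. Since $\rho$ generates $\ker(\varphi)$, the only way $\tilde x_i$ and $\tilde x_j$ can be kernel-equivalent is via the trivial chain, forcing $\tilde x_i = \tilde x_j$.

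The step I expect to be the main obstacle is this presentation-theoretic one: verifying carefully that no relation from $\rho$ can be applied at any factorization of $c$ whenever $c \not\ge_S b$. This is also where the ACCP hypothesis is essential --- it is what allows us to take $\rho = \bigcup_{b' \in \operatorname{Betti}(S)} \rho^{(b')}$ in the first place (Example~\ref{e:nonaccpi} shows that without ACCP the Betti elements may fail to yield a presentation at all). Once disjointness is secured, combining with coverage immediately yields the partition of $\mathcal{A}(S)$ by $\{A_i\}_{i \in I}$.
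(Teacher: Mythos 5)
Your proof is correct. The coverage half is exactly the paper's argument: Lemma~\ref{atom-betti}(a) plus the no-primes hypothesis and uniqueness of the Betti element give $a \le_S b$ for every atom $a$, hence $\mathcal{A}(S) = \operatorname{AS}(b) = \bigcup_i A_i$. For disjointness the two proofs diverge in presentation, though they rest on the same underlying fact. The paper simply observes that the unique Betti element is Betti-minimal and cites \cite[Proposition~3.6]{isolated} to conclude that all factorizations of $b$ are isolated (pairwise support-disjoint). You instead reprove the needed special case from scratch: if $x_i$ and $x_j$ shared an atom $a$, cancellativity would produce two distinct factorizations of $b-a$; but since every relation in a minimal presentation occurs at $b$ (this is where ACCP enters, exactly as you flag), no relation can be applied to any factorization of an element $s$ with $s \not\ge_S b$, and $b - a \not\ge_S b$ because $a$ is a nonzero non-unit — so $b-a$ factors uniquely, a contradiction. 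Your translation-of-relations argument is sound (each $\rho$-step preserves $\varphi$, so the chain stays inside $\mathsf{Z}(b-a)$, and the chain must be trivial). What your route buys is self-containment — you are essentially re-deriving the content of \cite[Corollary~3.8]{isolated}, that any element with more than one factorization is divisible by a Betti element — at the cost of a longer argument; the paper's route is shorter but leans on an external result about isolated factorizations of Betti-minimal elements.
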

\begin{proof}
    By Lemma~\ref{atom-betti}(a), every $a\in \mathcal{A}(S)$ verifies $a\le_S b$. Hence $\mathcal{A}(S)=\bigcup A_i$. Since there is only a Betti element $b$, this Betti element is minimal and by \cite[Proposition~3.6]{isolated} all its factorizations are isolated. In particular, the sets $A_i$, which are the support of these factorizations, are disjoint.
\end{proof}

\begin{example}\label{generic-presentation}
It is easy to show that the second implication in \eqref{mainresult} is not reversible.  Let $S=\langle 3,5,7\rangle$. Then, $\operatorname{Betti}(S)=\{10,12,14\}$, and for all $b\in \operatorname{Betti}(S)$ and all $a\in\mathcal{A}(S)$, $b-a\in S$. \end{example}

We note that the phenomenon of Example \ref{generic-presentation} arises with every monoid with a generic presentation, i.e. all of whose atoms occur in all minimal relations.  They are uniquely presented, as shown in \cite[Proposition~5.5]{omega}). 

We now consider a factorization $z=\sum_{i\in I} n_ia_i$ of an element $d$, where the $a_i$ are atoms and $n_i$ are positive integers (and thus $I$ has finitely many elements). Define the \emph{multiplicity multiset} of $z$ as $\operatorname{M}(z)=[n_i]_{i\in I}$, where 
multiplicity is kept, but the elements are unordered.  For example $\operatorname{M}(3a+3b+c)=[3,3,1]=[1,3,3]$.  

Let $S$ be a monoid with notation as in Lemma~\ref{cor:atom-divides-single-betti}.  For each $x_i\in \mathsf{Z}(d)$, define $M_i=\operatorname{M}(x_i)$, a multiset of positive integers.  Call the set of these, 
\[\operatorname{MS}(d)=\{M_i\},\] the \textit{multiplicity shadow} of $d$.  For example, if $d=a+b=3c+2d+3e$ are the only two irreducible factorizations of $d$, then $\operatorname{MS}(d)=\{[1,1],[2,3,3]\}$.

\begin{example}
Letting $S=\langle 6,10,15\rangle$, we have 
\[
\operatorname{MS}(30)=\{[5],[3],[2]\}
\qquad \text{and} \qquad 
\operatorname{MS}(55)=\{[5,1,1],[4,1],[1,3]\}.
\]
\end{example}

In the one Betti element case, we now show that this multiplicity shadow determines the monoid up to isomorphism.

\begin{theorem} \label{char1BE} Let $S,T$ be cancellative, reduced, atomic, prime-free monoids, with $\operatorname{Betti}(S)=\{d_S\}$ and $\operatorname{Betti}(T)=\{d_T\}$.  Suppose that $\operatorname{MS}(d_S)=\operatorname{MS}(d_T)$. Then, $S$ is isomorphic to $T$.
\end{theorem}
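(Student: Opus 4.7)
The plan is to construct a monoid isomorphism $S\to T$ from a multiplicity-preserving bijection of atoms, and then verify that this bijection lifts through the factorization morphisms to identify the two kernels. Since both monoids are prime-free with a single Betti element, Lemma~\ref{cor:atom-divides-single-betti} supplies partitions of the atom sets: writing $\mathsf{Z}(d_S)=\{x_i\}_{i\in I}$ and $\mathsf{Z}(d_T)=\{y_j\}_{j\in J}$, the supports $A_i=\operatorname{supp}(x_i)$ partition $\mathcal{A}(S)$, and the supports $B_j=\operatorname{supp}(y_j)$ partition $\mathcal{A}(T)$.

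Next, I would extract from the hypothesis $\operatorname{MS}(d_S)=\operatorname{MS}(d_T)$ a bijection $\pi\colon I\to J$ with $\operatorname{M}(x_i)=\operatorname{M}(y_{\pi(i)})$ for every $i\in I$. For each such $i$, the equality of these multiplicity multisets lets me choose a bijection $\phi_i\colon A_i\to B_{\pi(i)}$ sending an atom of multiplicity $n$ in $x_i$ to an atom of multiplicity $n$ in $y_{\pi(i)}$. Because the families $\{A_i\}$ and $\{B_j\}$ partition their respective atom sets, the $\phi_i$ assemble into a single bijection $\phi\colon \mathcal{A}(S)\to \mathcal{A}(T)$, which extends freely to a monoid isomorphism $\tilde\phi\colon \mathcal{F}(\mathcal{A}(S))\to\mathcal{F}(\mathcal{A}(T))$. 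By construction, $\tilde\phi(x_i)=y_{\pi(i)}$ for every $i$.

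To finish, I would show that $\tilde\phi$ descends to an isomorphism $S\cong T$, which amounts to verifying that $\tilde\phi$ carries the kernel of the factorization morphism of $S$ onto that of $T$. By \cite[Theorem~1]{acpi} (applicable thanks to the ACCP assumption), any pair in the kernel is joined by a chain of elementary relations occurring at Betti elements. Since $d_S$ is the unique Betti element of $S$ and, by \cite[Proposition~3.6]{isolated} (as invoked in the proof of Lemma~\ref{cor:atom-divides-single-betti}), its factorizations are pairwise isolated, every such elementary relation has the form $(x_i+c,x_j+c)$ for some $i,j\in I$ and $c\in\mathcal{F}(\mathcal{A}(S))$. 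Hence the kernel is generated as a congruence by $\{(x_i,x_j):i,j\in I\}$, and the analogous statement holds for $T$. Since $\tilde\phi$ sends the first generating set bijectively onto the second, the two kernels correspond under $\tilde\phi$, and $\tilde\phi$ descends to the desired monoid isomorphism.

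The main obstacle I expect is the reduction of the full factorization kernel to the generating pairs $(x_i,x_j)$: this is where the single-Betti-element hypothesis, the ACCP, and the isolated-factorization property from \cite{isolated} must combine. Once that reduction is established, the multiplicity-shadow hypothesis is tailor-made to produce an atomic bijection respecting those generators, so assembling $\tilde\phi$ and descending it to an isomorphism of monoids is essentially formal.
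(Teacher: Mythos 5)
Your proposal is correct and follows essentially the same route as the paper: build the atom bijection from the multiplicity-shadow hypothesis and the partition of Lemma~\ref{cor:atom-divides-single-betti}, extend it to the free monoids, and descend it by checking that the factorization kernels (each generated by the pairs of isolated factorizations of the unique Betti element) correspond. The only cosmetic difference is that the paper invokes Grillet's universal property to construct the two maps $f$ and $g$ separately and then composes them, whereas you argue the kernel correspondence directly; both are sound.
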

\begin{proof}
We first construct a bijection $\tau$ from $\mathcal{A}(S)$ to $\mathcal{A}(T)$.  Note that since $\operatorname{MS}(d_S)=\operatorname{MS}(d_T)$, we must have $|\operatorname{MS}(d_S)|=|\operatorname{MS}(d_T)|$, so $|\mathsf{Z}(d_S)|=|\mathsf{Z}(d_T)|$.
Now, reorder $\mathsf{Z}(d_T)$ if necessary, so that $\mathsf{Z}(d_S)=\{x_1, x_2,\ldots\}$, $\mathsf{Z}(d_T)=\{y_1,y_2,\ldots\}$, and $\operatorname{M}(x_i)=\operatorname{M}(y_i)$ for all $i$.  
Next, for any $i$, consider $x_i=\sum_{j\ge 1} n_ja_j$ and $y_i=\sum_{j \ge 1} n_j' b_j$, where the $a_j$ are atoms of $S$ and $b_j$ are atoms of $T$.  Reorder the $a_j, b_j$ if necessary so that $n_1\le n_2\le\cdots$ and $n_1' \le n_2'\le \cdots$.  Since $\operatorname{M}(x_i)=\operatorname{M}(y_i)$, after this reordering we in fact must have $n_j'=n_j$ for all $j$.  We now define a partial bijection $\tau$ from $S$ to $T$ via $\tau(a_j)=b_j$.  
We repeat this process for each $i$.  By Lemma~\ref{cor:atom-divides-single-betti}, every atom of $S$ appears in just one $x_i$, and each atom of $T$ appears in just one $y_i$, so these partial bijections combine into a full bijection $\tau$ from $\mathcal{A}(S)$ to $\mathcal{A}(T)$.  

Second, we claim $\tau$ can be extended to a surjective homomorphism $f:S \to T$.  Linearly extend $\tau$ to an isomorphism $\psi\colon  \mathcal{F}(\mathcal{A}(S)) \to  \mathcal{F}(\mathcal{A}(T))$. Let $\varphi_S\colon \mathcal{F}(\mathcal{A}(S)) \to S$ and $\varphi_T\colon  \mathcal{F}(\mathcal{A}(T))\to T$ be the factorization homomorphisms of $S$ and $T$, respectively. According to \cite[Proposition~2.4]{grillet}, there is a unique morphism $f: S\to T$ satisfying $f \circ \varphi_S = \varphi_T\circ \psi$, provided we ensure $\ker(\varphi_S)\subseteq \ker(\varphi_T\circ \psi)$.  Indeed, by the construction of $\tau$, for each~$i$ we have $\psi(x_i)=y_i$, so $\psi$ induces a bijection $\mathsf{Z}(d_S) \to \mathsf{Z}(d_T)$.  As such, applying $\psi$ to each relation $(x_i, x_j) \in \ker(\varphi_S)$ in a minimal presentation for $S$ yields $(y_i, y_j) \in \ker(\varphi_T)$.  This ensures $\ker(\varphi_S)\subseteq \ker(\varphi_T\circ \psi)$, thereby proving the claim.  

Lastly, applying the preceding paragraph to $\tau^{-1}$ yields a homomorphism $g:T \to S$.  However, $g \circ f$ and $f \circ g$ are the identity maps on $S$ and $T$, respectively, since they restrict to the identity maps on $\mathcal A(S)$ and $\mathcal A(T)$, respectively.  This implies $f$ is an isomorphism, and the proof is complete.  
\end{proof}

A multiplicity multiset of a non-irreducible $x$ cannot be a single $1$, that is, $\operatorname{M}(x)=[1]$, since that would suggest that $x$ is in fact an irreducible.  Any other multiplicity multiset is possible, and indeed we now show that any multiplicity shadow is possible.

\begin{theorem}\label{anymultshadow} Let $MS=\{M_1, M_2, \ldots\}$ be a nonempty set containing multisets $M_i$, where each $M_i$ is a finite nonempty multiset of positive integers, not equal to $[1]$.  Then, there is a monoid $S$ with single Betti element $d$ whose multiplicity shadow satisfies $\operatorname{MS}(d)=MS$.
\end{theorem}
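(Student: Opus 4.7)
I plan to construct $S$ by an explicit presentation. Index $MS$ by a set $I$ with $|I|\ge 2$ (duplicating the single multiset under two labels if $|MS|=1$), write $M_i=[n_{i,1},\ldots,n_{i,k_i}]$, and introduce disjoint formal atom symbols $\mathcal{A}=\{a_{i,j}:i\in I,\,1\le j\le k_i\}$. Form the free commutative monoid $F=\mathcal{F}(\mathcal{A})$, set $z_i=\sum_j n_{i,j}\,a_{i,j}\in F$, fix an arbitrary $i_0\in I$, and inside $V:=\mathbb{Z}^{\mathcal{A}}$ let $R'$ be the subgroup generated by $\{z_{i_0}-z_i:i\in I\setminus\{i_0\}\}$. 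Define $S$ as the image of $F$ in the quotient group $V/R'$; equivalently, $S$ is the commutative monoid with generators $\bar a_{i,j}$ and defining relations $\sum_j n_{i_0,j}\bar a_{i_0,j}=\sum_j n_{i,j}\bar a_{i,j}$ for each $i\ne i_0$. The candidate Betti element is $d:=\bar z_{i_0}$.

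Cancellativity of $S$ is automatic from its embedding in an abelian group. For reducedness, define $\phi\colon V\to\mathbb{R}$ by $\phi(a_{i,j})=1/(n_{i,j}k_i)$, so $\phi(z_i)=1$ for every $i$; hence $\phi$ vanishes on $R'$ and descends to $V/R'$, strictly positive on the nonzero cone, forcing $\bar 0$ to be the only unit. A coordinate-by-coordinate analysis of the elements of $R'$ (using $M_i\neq[1]$ to rule out spurious integer cancellations) shows the images $\bar a_{i,j}$ are pairwise distinct in $V/R'$ and that any decomposition $\bar a_{i,j}=\bar s+\bar t$ lifts to $s+t=a_{i,j}$ already in $F$, so the $\bar a_{i,j}$ are exactly the atoms of $S$.

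For the Betti calculation, fix $f_0\in F$ representing $\bar f_0\in S$ and set $\alpha_i(f_0)=\min_j\lfloor(f_0)_{i,j}/n_{i,j}\rfloor$ and the residue $r(f_0)=f_0-\sum_i\alpha_i(f_0)\,z_i\in F$. Enumerating the $r\in R'$ with $f_0+r\in F$ produces a bijection $\mathsf{Z}(\bar f_0)\leftrightarrow\{(b_i)\in\mathbb{N}_0^{I}:\sum_i b_i=K\}$ where $K:=\sum_i\alpha_i(f_0)$, via $f_b=r(f_0)+\sum_i b_i\,z_i$. If $r(f_0)\neq 0$, every $f_b$ contains $\operatorname{supp}(r(f_0))$, so $\nabla_{\bar f_0}$ is connected and $\bar f_0\notin\operatorname{Betti}(S)$. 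If $r(f_0)=0$ then $\bar f_0=Kd$: for $K=1$ the factorizations are $\{z_i:i\in I\}$ with pairwise disjoint supports (atoms are partitioned by class), so $d$ is a Betti element with $|I|$ isolated factorizations and shadow $\{M_i\}=MS$; for $K\ge 2$, any two factorizations $(b_i),(b_i')$ are connected through the intermediate $(b_i'')$ with $b_i''=1$ at some index where $(b_i)$ has positive entry and $b_{i'}''=K-1$ at some index where $(b_i')$ has positive entry, yielding a single $\mathcal{R}$-class. Hence $\operatorname{Betti}(S)=\{d\}$ and $\operatorname{MS}(d)=MS$.

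The main obstacle is the combinatorial verification that $r(f_0)=0$ depends only on $\bar f_0$, so that the dichotomy above is well-posed: this reduces to showing every representative of $Kd$ in $F$ must have the form $\sum_i b_i z_i$ with $\sum b_i=K$, which follows from the same nonnegativity-in-$R'$ computation used to pin down $\mathsf{Z}(d)$. ACCP is automatic when $|MS|$ is finite since $S$ is then finitely generated; in general it follows because a descending chain of divisors of any fixed $\bar f\in S$ is trapped in the finitely generated submonoid associated to the classes whose atoms appear in its representatives, and within each such submonoid the $\alpha_i$ values provide a length bound.
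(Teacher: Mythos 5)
Your construction is essentially the paper's: the free commutative monoid on atoms $a_{i,j}$ (one block per multiset) modulo the identifications $z_{i_0}=z_i$, with $d=\bar z_{i_0}$ as the unique Betti element; your version is correct and in fact supplies several verifications the paper's one-paragraph proof omits (cancellativity and reducedness of the quotient, that the $\bar a_{i,j}$ remain distinct atoms, that no element other than $d$ --- in particular no multiple $Kd$ with $K\ge 2$ --- is a Betti element, and the degenerate case $|MS|=1$, where the paper's congruence would be trivial). The one imprecise spot is the final ACCP remark: when $I$ is infinite, an element such as $d$ itself is divisible by atoms from every class, so a chain of its divisors is not literally trapped in a finitely generated submonoid; however, the invariants you already introduced do the job, since along a strictly descending divisibility chain the integer $K(\bar s)=\sum_i\alpha_i$ is non-increasing and, once it stabilizes, the residues $r(\bar s)$ must strictly decrease in the free monoid $F$, which is impossible indefinitely.
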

\begin{proof}
Let $I=\{i:M_i\in MS\}$ be the set of indices appearing in the multisets $M_i$; note that $I$ might be infinite.  For each $i$, let $n(i)$ denote the number of integers appearing in $M_i$.  We~now define $T$ as the free abelian monoid with atoms $a_{i,j}$, where $i\in I$ and $1\le j\le n(i)$. 
For each $M_i\in MS$, write $M_i=\{m_1, m_2, \ldots, m_{n(i)}\}$.  Define $d_i=\sum_{j=1}^{n(i)} m_j a_{i,j}$, and let $S$ be the quiotient of $T$ by the  congruence generated by $\{ (d_1,d_i) : i \in I\}$. Then, in $S$, each factorization $d_1, d_2, \ldots$ occurs at the same element $d$, and by construction, each factorization of $d$ is built from different atoms, so $d$ is the unique Betti element.
\end{proof}

The one Betti element case is closely related to a factorization property that has been studied widely in the recent literature (see \cite{l-f, CoyS11, GZ21}). If for each $s\in S$ and each
distinct pair of factorizations $z_1$ and $z_2$ taken from $\mathsf{Z}(s)$ we have $|z_1| \neq |z_2|$, then $S$ is
called a \textit{length-factorial monoid}.

\begin{theorem}\label{lfobe} Let $S$ be a monoid that is not factorial.  The following statements are equivalent.
\begin{enumerate}
    \item $S$ is length factorial.
    \item $S$ has one Betti element with exactly two factorizations each of different length.
\end{enumerate}
\end{theorem}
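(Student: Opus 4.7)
The plan is to prove each implication in turn, with the main work in $(1) \Rightarrow (2)$. The key object I will work with is the subgroup
\[K = \{z - z' : z, z' \in \mathcal{F}(\mathcal{A}(S)),\ \varphi(z) = \varphi(z')\} \subseteq \mathbb{Z}^{\mathcal{A}(S)},\]
together with the length functional $\ell(v) = \sum_a v_a$ and its hyperplane $H = \ker \ell$. The guiding observation is that $S$ is length factorial if and only if $K \cap H = \{0\}$: two distinct factorizations $z, z'$ of a common element correspond to a nonzero $z - z' \in K$, and their lengths agree precisely when $z - z' \in H$.

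For $(2) \Rightarrow (1)$ I argue as in the proof of Proposition~\ref{betti-half-factorial}. Given factorizations $u, v$ of a common element, the presentation result from \cite{acpi} produces a chain $u = w_1, \ldots, w_t = v$ in which each step swaps a copy of $z_1$ for $z_2$ (or vice versa) inside the factorization, where $\mathsf{Z}(b) = \{z_1, z_2\}$ is the unique Betti element. After $n$ net swaps in the $z_1 \to z_2$ direction, $w_t - w_1 = n(z_2 - z_1)$ and $|w_t| - |w_1| = n(|z_2| - |z_1|)$. Since $|z_2| - |z_1| \neq 0$, equal lengths force $n = 0$ and hence $u = v$.

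For $(1) \Rightarrow (2)$, length factoriality makes $\ell|_K$ injective, so since $S$ is non-factorial, $K \cong \mathbb{Z}$; fix a generator $g$. Both $g^+$ and $g^-$ are nonzero: otherwise $\varphi(g^\pm) = 0$ in $S$ forces $g^\pm = 0$ and hence $g = 0$. Setting $c = \varphi(g^+) = \varphi(g^-)$, the factorizations $g^+, g^-$ of $c$ have disjoint support, so $c$ is a Betti element with $\mathsf{Z}(c) = \{g^+, g^-\}$. The main step is to show $c$ is the only Betti element: fixing any $s \in S$ and any $z_0 \in \mathsf{Z}(s)$, every factorization of $s$ has the form $z_0 + ng$ for some integer $n$, so for $s$ to be Betti some pair $z, z' \in \mathsf{Z}(s)$ must have disjoint support. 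Writing $z - z' = mg$ with $m \neq 0$ and comparing signs on $\operatorname{supp}(g^+)$ and $\operatorname{supp}(g^-)$, the disjoint-support condition (combined with $z, z' \geq 0$) forces $\{z, z'\} = \{mg^+, mg^-\}$ and $s = mc$. For $m \geq 2$, the intermediate factorization $g^+ + (m{-}1)g^- \in \mathsf{Z}(mc)$ shares atoms with both $mg^+$ and $mg^-$, collapsing the $\mathcal{R}$-classes; so only $m = 1$ yields a Betti element. Finally $|g^+| \neq |g^-|$ because $|g| \neq 0$.

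I expect the main obstacle to be this uniqueness step, in particular pinning down the $\{mg^+, mg^-\}$ structure from disjoint support: one must use that $z - z' = mg$ has opposite signs on $\operatorname{supp}(g^+)$ and $\operatorname{supp}(g^-)$, combined with $z(a)\, z'(a) = 0$ atomwise, to conclude that $z$ and $z'$ live on disjoint halves of $\operatorname{supp}(g)$ with the stated scalar. The subsequent check that $mc$ fails to be Betti for $m \geq 2$ is a direct verification that any intermediate factorization bridges the two extremes.
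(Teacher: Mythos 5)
Your proof is correct, and it takes a genuinely different route from the paper's. The paper disposes of both directions by citation: for $(1)\Rightarrow(2)$ it invokes \cite[Proposition~3.5]{l-f} (length factorial implies a single Betti element), \cite[Lemma~3.1]{isolated} (the factorizations of a minimal Betti element are isolated), and the cyclic-kernel characterization \cite[Theorem~3.1]{l-f} to cap the number of factorizations at two; for $(2)\Rightarrow(1)$ it again routes through the cyclic-kernel theorem. You instead work self-containedly with the difference subgroup $K\subseteq\mathbb{Z}^{(\mathcal{A}(S))}$ of $\ker\varphi$ and the length functional $\ell$: length factoriality makes $\ell|_K$ injective, so $K=\mathbb{Z}g$, and the positive/negative parts $g^{\pm}$ hand you the unique Betti element together with its exactly two, disjointly supported, unequal-length factorizations; your uniqueness argument (any disjoint-support pair must be $\{mg^+,mg^-\}$, and for $m\ge 2$ the bridge $g^++(m-1)g^-$ connects them) in effect re-proves the relevant content of \cite[Proposition~3.5 and Theorem~3.1]{l-f} in this setting. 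For $(2)\Rightarrow(1)$ you use the chain-of-relations argument from \cite{acpi} (the same device as in Proposition~\ref{betti-half-factorial}), which has the virtue of making visible exactly where the ``different length'' hypothesis is used — something the paper's terse proof leaves buried in the citation, even though Example~\ref{morerestrictedblockmonoid} shows that hypothesis is essential. The trade-off is that your argument requires a few small checks you only gesture at: that the set of differences $K$ is in fact a subgroup, that $\varphi(g^+)=\varphi(g^-)$ (which uses cancellativity to strip off $z\wedge z'$), and that nonnegativity of $g^++ng$ forces $n\in\{0,-1\}$ so that $\mathsf{Z}(c)=\{g^+,g^-\}$; all are routine, so I see no gap.
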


\begin{proof} 
If $S$ is length factorial and it is not factorial, then it has a single Betti element, say $d$ (\cite[Proposition~3.5]{l-f}). We know that $\ker (\pi)$ is generated by pairs of factorizations of $d$, since they are all with disjoint support (we only have one Betti element, and thus it is Betti minimal and we can apply \cite[Lemma~3.1]{isolated}). If we want $\ker(\pi)$ to be "cyclic" (\cite[Theorem~3.1]{l-f}), then we can only have two factorizations.

For the converse, if $S$ has only one Betti element with two factorizations $z$ and $z'$, then a minimal presentation for $\ker(\pi)$ is $\{(z,z')\}$, and consequently $\ker(\pi)$ is cyclic, which by \cite[Theorem~3.1]{l-f} means that $S$ is length factorial.
\end{proof}

Thus, the first implication in \eqref{mainresult} is established.  The next example show that the converse fails.

\begin{example}  Any two generated numerical monoid is length factorial, as it easily satisfies criteria (2).  On the other hand, by \cite[Example 3.4]{CJMM}, the Puiseux monoid $M := \left\langle \frac{1}{p} :  p \in \mathbb{P} \right\rangle$ has $\text{Betti}(M) = \{1\}$, but here
$2 = 3 \left(\frac{1}{3}\right) + 7 \left(\frac{1}{7}\right) = 10 \left(\frac{1}{5}\right)$ and hence $M$ is not length factorial.
\end{example}

\begin{example} \label{restrictedblock}
We shall argue shortly in Corollary~\ref{cor:blockmonoid} that $\mathbb{Z}_3$ and $\mathbb{Z}_2\oplus\mathbb{Z}_2$ are the only two abelian groups that yield a full block monoid with exactly one Betti element.  Moreover, in each of these cases the Betti element admits exactly two irreducible factorizations 
\[
\begin{array}{c}
(3,3)=(3,0)+(0,3) = (1,1) +(1,1)+(1,1) \\[5pt]
(2,2,2) = (2,0,0)+(0,2,0)+(0,0,2) = (1,1,1) + (1,1,1),
\end{array}
\]
and hence both are length factorial.
    
Here is a block monoid construction of a length factorial monoid taken from \cite[Example 7]{CS90}. We first slightly extend the notion of a block monoid.  Let $S\subseteq G$ and define
\[
\mathcal{B}(G,S) = \left\{x \in \mathcal{F}(S) : \eval(x) = 0 \right\},
\]
which we refer to as the block monoid on $G$ restricted to $S$.  It trivially follows that $\mathcal{B}(G,S)$ is a submonoid of $\mathcal{B}(G)$, and if $B$ is a Betti element of $\mathcal{B}(G,S)$, then $B$ is also a Betti element of $\mathcal{B}(G)$.

Let $n\geq 2$ be a positive integer and $~{\displaystyle G\cong \sum_{i=1}^{n}\mathbb{Z}_{n+2}}$ a finite abelian group.  Let $e_i$ be the $i$th canonical basis vector for $G$ and set 
$f=\sum_{i=1}^n -e_i$.  Let 
\[
S=\{e_1, e_2, \ldots, e_n, f\}
\]
be a subset of $G$.  We consider the restricted block monoid $\mathcal{B}(G,S)$.  Based on the calculations in \cite[Example 7]{CS90}, the irreducible elements of $\mathcal{B}(G,S)$ are $B_i=e_i^{n+2}$ (for each $1\leq i\leq n$), $C=f^{n+2}$, and
\[
D=\left(\prod_{i=1}^n e_i\right)\cdot f.
\]
It easily follows that the only Betti element of $\mathcal{B}(G,S)$ is
\[
B=\left( \prod_{i=1}^n e_i^{n+2}\right)\cdot f^{n+2}=\left(\prod_{i=1}^n B_i\right)\cdot C = D^{n+2}.
\]
Moreover, the two irreducible factorizations above are the only factorizations of $B$.  Thus $\mathcal{B}(G,S)$ has only one Betti element with exactly two irreducible factorizations with different lengths and hence is length factorial.  
\end{example}

We note that the block monoids we discuss here are Krull monoids (see \cite[Section 2.3]{GHKb}).  Length factorial Krull monoids have been characaterized in \cite{GZ21}, and this example essentially reflects all those Krull monoids with finite divisor class group. 

\begin{example}\label{morerestrictedblockmonoid}  We note that there are half-factorial monoids with one Betti element with exactly two different factorizations (this indicates that the stipulation in part (2) of Theorem~\ref{lfobe} that the Betti
element has two factorizations of different length is vital). We use the construction set up in Example~\ref{restrictedblock} to demonstrate this.
Let $t>2$ be an integer and set $n=2t$.  Consider the restricted block monoid $\mathcal{B}(\mathbb{Z}_n,S)$ with $S=\{e, te\}$ where $e$ is a generator of $\mathbb{Z}_n$. The monoid $\mathcal{B}(\mathbb{Z}_n,S)$ has three different irreducible blocks: $B=e^n$, $C=(te)^2$, and $E=e^t(te)^1$. By \cite[Corollary 3.9]{CS90}, $\mathcal{B}(\mathbb{Z}_n,S)$ is a half-factorial monoid (this can be verified using the irreducibles $B$, $C$, and $E$ and a straightforward combinatorial argument). It is also easy to verify that 
\[
\betti(\mathcal{B}(\mathbb{Z}_n,S))=\{e^n(te)^2\}
\]
where $e^n(te)^2 = (e^t(te))^2$ are the two irreducible factorizations of the Betti element. $\mathcal{B}(\mathbb{Z}_n,S)$ is a Krull monoid with divisor class group $\mathbb{Z}_n$. 

A second example can be had as follows (see \cite[Example 1.6]{CKO}).  Let
\[
M=\{(x_1, x_2, x_3, x_4)\in \mathbb{N}_0 :  x_1+x_2 = x_3+x_4\}.
\]  The irreducible elements of $M$ are $(1,0,1,0), (1, 0,0,1), (0,1,1,0),$ and $(0,1,0,1)$ and 
\[
\betti(M) = \{(1,1,1,1)\}.
\]
Here $(1,1,1,1)=(1,0,1,0)+(0,1,0,1) = (1,0,0,1)+(0,1,1,0)$ are the two irreducible factorizations of the Betti element. The set $M$ is a Krull monoid with divisor class group $\mathbb{Z}$.
\end{example}


\section{Full atomic support}
\label{sec:full-as}

Recall that an element $s$ in an atomic monoid $S$ has full atomic support if its atomic support is $\mathcal{A}(S)$, and that $S$ has full atomic support if every Betti element of $S$ has full atomic support.
We showed in Section 4 that monoids with a single Betti element have full atomic support.  Additional examples are reduced monoids with a generic presentation (see \cite{omega}). In each of these previous cases, we have that $\mathsf{c}(S)=\omega(S)=\mathsf{t}(S)$ (\cite[Theorem~5.6]{omega} and \cite[Theorem~19]{single}, respectively). In this section, we prove that this holds for all monoids with full atomic support.

\begin{example}
The numerical monoid $\langle 6,10,15\rangle$ has a single Betti element and thus has full atomic support. 
The numerical monoid $\langle 3,5,7\rangle$ has a generic presentation, and thus it has full atomic support. 
\end{example}

\begin{example}
Let $S=\langle 8, 11, 12, 13\rangle$. Then, $\operatorname{Betti}(S)=\{24, 33, 34, 38, 39\}$, and all the Betti elements of $S$ have full atomic support. The minimal presentation of $S$ is not generic. The monoids $\langle 10, 11, 15, 19\rangle$, $\langle 10, 13, 15, 17\rangle$, and $\langle  8, 19, 20, 21\rangle$ are also of full atomic support with more than one Betti element and their minimal presentations are not generic (these are the only examples with this property with Frobenius number less than 37; this search was carried out with the help of \cite{numericalsgps}).
\end{example}

\begin{example}
We now consider some examples of monoids with infinitely many Betti elements.  
Consider the Puiseux monoid $S = \langle a_i \rangle \subseteq \mathbb Q$, where 
\[
a_0 = 1, 
\quad 
a_1 = \tfrac{3}{2},
\quad \text{and} \quad 
a_i = \tfrac{1}{2}(a_{i-1} + a_{i-2})
\quad \text{for} \quad
i \ge 2.
\]
Each partial sequence $2^k a_1, \ldots, 2^k a_k$ generates a complete intersection numerical monoid, so every generator $a_i$ of $S$ is an atom, and $\betti(S) = \{2a_i : i \ge 1\}$.  In particular, $S$ has infinitely many Betti elements, all of which are incomparable to one another, although none have full atomic support.  

Next, consider the multiplicative monoid $S$ with generators $x_i$, $y_i$ for $i \in \mathbb Z$, subject to the relations $x_iy_j = x_{i+1}y_{j+1}$ for $i, j \in \mathbb Z$.  Each Betti element of $S$ is of the form $b_j = x_0y_j$ and has full atomic support.  If one instead restricts to only the relations $x_iy_j = x_{i+1}y_{j+1}$ with $1 \le j - i \le k$, then the resulting monoid has exactly $k$ Betti elements, each of which again has full atomic support.  
\end{example}

\begin{proposition}\label{prop:fr-bm}
Let $S$ be a monoid with full atomic support. Then, all its Betti elements are pairwise incomparable with respect to $\le_S$.
\end{proposition}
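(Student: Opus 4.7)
The plan is to argue by contradiction, leveraging Lemma~\ref{atom-betti}(b) directly. Suppose for contradiction that there exist Betti elements $b, c \in \operatorname{Betti}(S)$ with $b <_S c$. The goal will be to exhibit an atom that simultaneously must and must not divide $b$, which is the contradiction we are after.

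First, I would invoke Lemma~\ref{atom-betti}(b) on the pair $b <_S c$. That lemma yields an atom $a \in \mathcal{A}(S)$ such that $a \le_S c$ but $a \not\le_S b$. In particular, $a \notin \operatorname{AS}(b)$. Next, I would bring in the hypothesis that $S$ has full atomic support: since $b$ is itself a Betti element of $S$, the definition of full atomic support gives $\operatorname{AS}(b) = \mathcal{A}(S)$, so every atom of $S$, and in particular $a$, divides $b$. This contradicts $a \not\le_S b$ from the previous step.

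Since the assumption $b <_S c$ led to a contradiction, no two Betti elements of $S$ can be strictly comparable under $\le_S$, which is exactly the incomparability claim. (Note that two distinct Betti elements cannot satisfy $b = c$, so ``incomparable'' here means neither $b \le_S c$ nor $c \le_S b$ holds among distinct pairs; the argument is symmetric in $b$ and $c$.)

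There is no real obstacle here; the technical content has already been absorbed into Lemma~\ref{atom-betti}(b), whose proof relied on the ACCP and on the presentation of $\ker(\varphi)$ via relations at Betti elements. The present proposition is just the packaging of that lemma under the full atomic support hypothesis. If anything, the only subtle point worth flagging in the write-up is that full atomic support is applied to the \emph{smaller} element $b$ (not to $c$), since it is the membership $a \in \operatorname{AS}(b)$ that produces the contradiction.
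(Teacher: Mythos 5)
Your proof is correct and follows essentially the same route as the paper: both invoke Lemma~\ref{atom-betti}(b) on a hypothetical pair $b <_S c$ and then derive a contradiction from the full atomic support of the smaller Betti element $b$. The only cosmetic difference is that you use the ``in particular'' consequence of the lemma (an atom dividing $c$ but not $b$), whereas the paper uses the main statement (a factorization of $c$ with support disjoint from $\operatorname{AS}(b)$); these are interchangeable here.
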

\begin{proof}
Let $b$ and $b'$ be Betti elements such that $b <_S b'$. By Lemma~\ref{atom-betti}(b), there exits a factorization $z$ of $b'$ whose support is disjoint with the atomic support of $b$, which is impossible as $b$ has full atomic support.
\end{proof}

\begin{example} Consider the numerical semigroup $S=\langle 5,6,9\rangle$ with Betti elements $15,18$.  These are incomparable with respect to $\le_S$, and $18$ does not have full atomic support since $5\not \le 18$.  This shows that the converse of Proposition~\ref{prop:fr-bm} does not hold.
\end{example}

Let $S$ be a monoid and let $s\in S\setminus\{0\}$. The Apéry set of $s\in S$ is defined as 
\[
\operatorname{Ap}(S,s)=\{ x\in S : x-s\not\in S\}=S\setminus(s+S).
\]
Observe that if $w\in \operatorname{Ap}(S,s)$ and $w'\in S$ is such that $w'\le_S w$, then $w'\in \operatorname{Ap}(S,s)$, that is, the set $\operatorname{Ap}(S,s)$ is divisor-closed (using additive notation).

\begin{proposition}
Let $S$ be a monoid with the ascending chain property on principal ideals. The following are equivalent.
\begin{enumerate}
\item The monoid $S$ has full atomic support.
\item For every atom $a$ of $S$, all the elements in $\operatorname{Ap}(S,a)$ have a unique factorization into products of irreducibles.
\end{enumerate} 
\end{proposition}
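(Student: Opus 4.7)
The plan is to prove the two implications separately, leaning on the characterization of minimal presentations by Betti elements (valid under ACCP, via \cite[Theorem~1]{acpi}, as recalled after Proposition~\ref{betti-half-factorial}) and on the fact that a Betti element always admits at least two distinct factorizations.

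For (1)$\Rightarrow$(2), I would fix $a\in\mathcal{A}(S)$ and $w\in\operatorname{Ap}(S,a)$, and suppose for contradiction that $\mathsf{Z}(w)$ contains two distinct elements $z,z'$. Since $(z,z')\in\ker(\varphi)$, the ACCP lets me invoke \cite[Theorem~1]{acpi} exactly as in the proof of Proposition~\ref{betti-half-factorial}: there is a chain $z=z_0,z_1,\dots,z_n=z'$ with $n\ge 1$ whose consecutive pairs have the form $z_i=\alpha_i+c_i$, $z_{i+1}=\beta_i+c_i$ with $\varphi(\alpha_i)=\varphi(\beta_i)$ a Betti element $b_i$. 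Applying $\varphi$ to the first step gives $w=b_0+\varphi(c_0)$, that is, $b_0\le_S w$. Because $S$ has full atomic support, $a\le_S b_0$, and so $a\le_S w$, contradicting $w\in\operatorname{Ap}(S,a)$. (Equivalently, using the divisor-closedness of $\operatorname{Ap}(S,a)$ noted just before the proposition, $b_0\in\operatorname{Ap}(S,a)$ but $a\le_S b_0$, which is absurd.)

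For (2)$\Rightarrow$(1), I would argue contrapositively. If some $b\in\betti(S)$ fails to have full atomic support, then there is an atom $a\in\mathcal{A}(S)$ with $a\not\le_S b$; by definition this is exactly $b\in\operatorname{Ap}(S,a)$. On the other hand, $b$ being a Betti element means $\nabla_b$ has at least two $\mathcal{R}$-classes, so $|\mathsf{Z}(b)|\ge 2$, contradicting the unique-factorization conclusion of (2).

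I do not anticipate a real obstacle here; both implications collapse to one observation each. The only delicate point is ensuring that the chain-of-factorizations argument in the forward direction genuinely produces a Betti element that divides $w$, and this is precisely where ACCP is essential --- it is the same phenomenon guarded against in Example~\ref{e:nonaccpi}, and it explains why the hypothesis of the proposition includes the ascending chain condition on principal ideals.
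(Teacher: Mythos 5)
Your proof is correct and follows essentially the same route as the paper: the reverse implication is identical, and for the forward implication the paper simply cites \cite[Corollary~3.8]{isolated} for the fact that an element with more than one factorization is divisible by a Betti element, which you instead re-derive on the spot from the chain-of-factorizations theorem (the step where ACCP enters). Both arguments then conclude via the divisor-closedness of the Ap\'ery set, so there is nothing substantive to change.
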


\begin{proof}
Suppose that there exists $a$ an atom of $S$ and $w\in \operatorname{Ap}(S,a)$ such that $w$ has more than one factorization. By \cite[Corollary~3.8]{isolated}, there exists $b\in \operatorname{Betti}(S)$ such that $b\le_S w$. But then $b\in \operatorname{Ap}(S,a)$, meaning that $b-a\not\in S$, and so $b$ does not have full atomic support.

For the converse, suppose $b$ is a Betti element that does not have full atomic support.  Then there is some atom $a$ with $b-a\notin S$, so $b\in \operatorname{Ap}(S,a)$, and, by definition, $b$ has more than one factorization.
\end{proof}

Let $S$ be a numerical monoid with multiplicity $m$.
Such $S$ having the property that all the elements in $\operatorname{Ap}(S,m)$ have unique irreducible expression, were called numerical monoids with Apéry sets of unique expression in \cite{apery-unique}, and staircase monoids in \cite{staircase}. These monoids have attracted the attention of several researchers, in part due to the properties of their minimal presentations. Other families of numerical monoids having Apéry sets of unique expression include numerical monoids with maximal embedding dimension \cite[Chapter~2]{ns} and numerical monoids with $\alpha$-rectangular Apéry sets \cite{alpha-rectangular}.

Let $S$ be a monoid, $s\in S$, and $N$ a positive integer. We now consider the notion of the distance between elements of $\mathsf{Z}(s)$.  Given $z=\sum_A z_a a,z'=\sum_A z'_a a\in \mathsf{Z}(s)$, define the \emph{distance} between $z$ and $z'$ as 
\[\operatorname{d}(z,z')=\max\{ |z-(z\wedge z')|, |z'-(z\wedge z')|\},\] 
where $z\wedge z'=\sum_{a\in A} \min\{z_a,z'_a\}a$.
An $N$-\emph{chain} of factorizations connecting $z$ and $z'$ is a sequence $z_1,\dots, z_n\in \mathsf{Z}(s)$ such that $z=z_1$, $z_n=z'$ and $\operatorname{d}(z_i,z_{i+1})\le N$ for all $i\in \{1,\dots,n-1\}$. The \emph{catenary degree} of $s$, denoted here by $\mathsf{c}(s)$, is the minimum $N\in \mathbb{N}_0\cup\{\infty\}$ such that any two factorizations of $s$ can be connected by an $N$-chain. The catenary degree of $S$ is defined as $\mathsf{c}(S)= \sup\{ \mathsf{c}(s) : s\in S\}$.

The following result shows how to compute the catenary degree of $S$ once we know the factorizations of its Betti elements.

\begin{lemma}\label{lem:c-fr}
Let $S$ be a monoid all of whose Betti elements are pairwise incomparable with respect to $\le_S$. 
Then, 
\[
\mathsf{c}(S)=\sup \{ |z| : z\in \mathsf{Z}(\operatorname{Betti}(S))\}.
\]
\end{lemma}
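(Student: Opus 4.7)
My plan is to prove the two inequalities separately, with the upper bound obtained from a minimal-presentation chain argument and the lower bound driven by a key claim that under pairwise incomparability each $\mathcal{R}$-class of every Betti element is a singleton. For the upper bound, fix $s \in S$ and $z, z' \in \mathsf{Z}(s)$, and apply \cite[Theorem~1]{acpi} (using ACCP) to obtain a chain $z = z_1, \dots, z_n = z'$ in which $z_i = \alpha_i + c_i$ and $z_{i+1} = \beta_i + c_i$ with $(\alpha_i, \beta_i)$ a relation from a minimal presentation, so $\varphi(\alpha_i) = \varphi(\beta_i) \in \operatorname{Betti}(S)$. After absorbing any shared atoms of $\alpha_i$ and $\beta_i$ into $c_i$, one may assume $\alpha_i$ and $\beta_i$ have disjoint support, which gives $z_i \wedge z_{i+1} = c_i$ and therefore $\operatorname{d}(z_i, z_{i+1}) = \max(|\alpha_i|, |\beta_i|) \le \sup\{|z| : z \in \mathsf{Z}(\operatorname{Betti}(S))\}$. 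Hence $\mathsf{c}(s)$, and consequently $\mathsf{c}(S)$, is bounded above by this supremum.

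For the reverse inequality, I first establish the singleton claim. Suppose that distinct factorizations $y = a + w$ and $y' = a + w'$ both lie in $\mathsf{Z}(b)$ and share an atom $a$. Then $w \ne w'$ are two distinct factorizations of $b - a \in S$, so by \cite[Corollary~3.8]{isolated} there exists a Betti element $b'$ with $b' \le_S b - a$. Since $a$ is an atom of the reduced cancellative monoid $S$, we have $b - a <_S b$, so $b' <_S b$, contradicting pairwise incomparability. Hence any two distinct $z, z' \in \mathsf{Z}(b)$ have disjoint support, so $\operatorname{d}(z, z') = \max(|z|, |z'|)$. In particular, any chain within $\mathsf{Z}(b)$ that begins at $z$ has its first step of distance at least $|z|$, which forces $\mathsf{c}(b) \ge \sup\{|z| : z \in \mathsf{Z}(b)\}$. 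Taking the supremum over Betti elements then yields $\mathsf{c}(S) \ge \sup\{|z| : z \in \mathsf{Z}(\operatorname{Betti}(S))\}$, completing the proof.

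The main obstacle is the singleton claim itself, which combines \cite[Corollary~3.8]{isolated} with pairwise incomparability but requires careful verification that the Betti element $b'$ it produces is a strict predecessor of $b$; this uses both that $S$ is reduced and that it is cancellative in order to guarantee $b - a <_S b$. Once the singleton property is in place, the distance function $\operatorname{d}$ collapses to the maximum of lengths on $\mathsf{Z}(b)$, and the remaining manipulations with chains go through without further difficulty.
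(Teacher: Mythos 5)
Your proof is correct and follows essentially the same route as the paper: both arguments hinge on showing that pairwise incomparability forces every factorization of a Betti element to be isolated (pairwise disjoint supports), so that the distance between two factorizations of a Betti element collapses to the maximum of their lengths. You simply unpack the results the paper cites — the reduction $\mathsf{c}(S)=\sup\{\mathsf{c}(b):b\in\operatorname{Betti}(S)\}$ and the formula $\mathsf{c}(b)=\sup\{|z|:z\in\mathsf{Z}(b)\}$ for isolated factorizations — into the explicit chain argument from \cite{acpi}; note only that your ``absorbing shared atoms'' step is vacuous, since relations in a minimal presentation already join factorizations from distinct $\mathcal{R}$-classes and hence have disjoint supports.
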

\begin{proof}
We already know that $\operatorname{c}(S)=\sup\{ \operatorname{c}(b) : b\in \operatorname{Betti}(S)\}$ (this is a consequence of Section~3 in \cite{c-t,ph}). In light of Proposition~\ref{prop:fr-bm}, every Betti element is minimal and by \cite[Proposition~3.6]{isolated}, all the factorizations of every Betti element are isolated, that is, each $\mathcal{R}$-class of $\mathsf{Z}(b)$ for $b\in \operatorname{Betti}(S)$ is a singleton. By \cite[Corollary~9]{ph}, $\mathsf{c}(b)=\sup\{ |z| : z\in \mathsf{Z}(b)\}$, and the result follows easily.
\end{proof}

Let $S$ be a monoid and let $s\in S$. We now consider a function that measures how far an element is from being prime.  Define the $\omega$-\emph{primality} of $s$, denoted by $\omega(s)$, as the $k\in \mathbb{N}_0\cup\{\infty\}$ such that whenever $s \le_S  a_1+\cdots+a_n$ for some $a_1,\ldots, a_n\in S$, then $s\le_S \sum_{i\in I} a_ i$ for some  $I \subseteq \{1,\ldots, n\}$ with $|I|\le k$. The $\omega$-primality of $S$ is $\omega(S)=\sup\{ \omega(a) : a \in \mathcal{A}(S)\}$.

Let $I$ be a set indices, and let $\mathbf{e}_i$ be the sequence of $\mathbb{N}_0^{(I)}$ all of whose entries are zero, except the $i$th, which is equal to one.  If we want to compute the $\omega$-primality of an atom of $S$, then by \cite[Proposition~3.3]{omega} the set $\operatorname{Minimals}_\le (\mathsf{Z}(a+S))$ is important, where $\le$ is the usual partial ordering in $\mathbb{N}_0^{(\mathcal{A}(S))}$ (which we identify with $\mathcal{F}(\mathcal{A}(S))$). We next offer a description of this set. 

\begin{lemma}\label{lem:fr-ZaS}
Let $S$ be a monoid with full atomic support, and let $a\in \mathcal{A}(S)$. Then,
\[
\operatorname{Minimals}_\le (\mathsf{Z}(a+S))= \{\mathbf{e}_a\} \cup \left\{z\in\mathsf{Z}(\operatorname{Betti}(S)) : a\not\in \operatorname{supp}(z)\right\}.
\]
\end{lemma}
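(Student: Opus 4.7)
The plan is to prove both inclusions, leveraging two consequences of full atomic support: Proposition~\ref{prop:fr-bm} combined with \cite[Proposition~3.6]{isolated} to conclude every factorization of every Betti element is isolated, and the general principle \cite[Corollary~3.8]{isolated} that any element with at least two factorizations is divisible by a Betti element.

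For the inclusion $\supseteq$, note $\mathbf{e}_a$ is vacuously minimal in $\mathsf{Z}(a+S)$ since its only proper sub-factorization is $0 \notin \mathsf{Z}(a+S)$. Now fix $b \in \operatorname{Betti}(S)$ and $z \in \mathsf{Z}(b)$ with $a \notin \operatorname{supp}(z)$; full atomic support gives $a \le_S b$, so $z \in \mathsf{Z}(a+S)$. Suppose toward contradiction that $z' \lneq z$ lies in $\mathsf{Z}(a+S)$, and write $e = z - z' \ne 0$ and $b' = \varphi(z')$, so $b' <_S b$ with $a \le_S b'$. Pick a factorization $w'$ of $b'$ containing $a$ (append $\mathbf{e}_a$ to any factorization of $b'-a$); then $w' + e \in \mathsf{Z}(b)$ shares the nonempty support of $e$ with $z = z' + e$, so $w'+e$ and $z$ lie in the same $\mathcal{R}$-class of $\nabla_b$. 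By the isolatedness cited above, this forces $w'+e = z$, contradicting $a \in \operatorname{supp}(w'+e) \setminus \operatorname{supp}(z)$.

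For the inclusion $\subseteq$, take $z \in \operatorname{Minimals}_\le(\mathsf{Z}(a+S))$. If $a \in \operatorname{supp}(z)$, then $\mathbf{e}_a \le z$ and $\mathbf{e}_a \in \mathsf{Z}(a+S)$, so minimality yields $z = \mathbf{e}_a$. Otherwise $a \notin \operatorname{supp}(z)$; put $s = \varphi(z) \ne a$, so $s - a \ne 0$ admits a factorization $u$ making $w = \mathbf{e}_a + u$ a factorization of $s$ distinct from $z$. To conclude $s \in \operatorname{Betti}(S)$, I plan to show $z$ is isolated in $\mathsf{Z}(s)$: if so, $\{z\}$ and the $\mathcal{R}$-class of $w$ are two distinct $\mathcal{R}$-classes of $\nabla_s$. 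Suppose some $z' \in \mathsf{Z}(s) \setminus \{z\}$ shares an atom with $z$, so $z \wedge z' \ne 0$. The truncations $r = z - (z \wedge z')$ and $r' = z' - (z \wedge z')$ are nonzero (otherwise reducedness together with $\varphi(z) = \varphi(z')$ forces $z = z'$), support-disjoint, hence distinct, and both factor $s_0 = s - \varphi(z \wedge z')$. By \cite[Corollary~3.8]{isolated} some $b \in \operatorname{Betti}(S)$ satisfies $b \le_S s_0$, and full atomic support promotes this to $a \le_S b \le_S s_0$; thus $r \in \mathsf{Z}(a+S)$, and since $r \lneq z$ this contradicts the minimality of $z$.

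The main obstacle I expect is the isolatedness step in the forward inclusion, where the full atomic support hypothesis is essential: it converts the mere existence of a Betti element below $s_0$ into the sharper fact that $s_0 \in a+S$, which is exactly what is needed to invoke minimality of $z$ for the contradiction. The remaining technicalities (that cancellation forces $r \ne 0$ whenever $z \ne z'$, that disjoint supports give $r \ne r'$, and that a shared atom forces $z \wedge z' \ne 0$) are routine bookkeeping once the strategy is in place.
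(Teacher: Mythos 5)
Your proof is correct, but it takes a noticeably different route from the paper's. The paper runs both inclusions through the chain-of-factorizations theorem \cite[Theorem~1]{acpi}: given a minimal $x\in\mathsf{Z}(a+S)$ with $x_a=0$, it pairs $x$ with a factorization $y$ of $\varphi(x)$ having $y_a\neq 0$, walks the chain from $x$ to $y$, and extracts from its first step a Betti factorization $z\le x$; full atomic support then puts $z$ in $\mathsf{Z}(a+S)$ and minimality forces $x=z$. The reverse inclusion is handled by the same extraction applied to a hypothetical $x<z$, contradicting the incomparability of Betti elements (Proposition~\ref{prop:fr-bm}). You instead lean on the isolatedness of all factorizations of Betti elements (Proposition~\ref{prop:fr-bm} together with \cite[Proposition~3.6]{isolated}) and on \cite[Corollary~3.8]{isolated}: for $\subseteq$ you show the minimal factorization $z$ is itself isolated by truncating a hypothetical $\mathcal{R}$-neighbor and finding a Betti element (hence $a$) below the truncated element, violating minimality; for $\supseteq$ you build an explicit competing factorization $w'+e$ of the Betti element and let isolatedness kill it. Both arguments are sound and both exploit the same key mechanism --- full atomic support upgrading ``divisible by a Betti element'' to ``divisible by $a$'' --- but yours avoids invoking the chain theorem directly, at the cost of the extra bookkeeping with $z\wedge z'$; the paper's version is shorter because the chain theorem hands it the Betti factorization below $x$ in one step. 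All the results you cite are ones the paper itself uses in this generality (ACCP monoids), so there is no gap.
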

\begin{proof}
Let $x\in \operatorname{Minimals}_\le(\mathsf{Z}(a+S))$, with $x\neq \mathbf{e}_a$. Then, $x_a=0$. Let $s=\varphi(x)$, with $\varphi$ as in \eqref{eq:fact-hom}. It follows that $s\in a+S$, and consequently there exists $y\in \mathsf{Z}(s)$ such that $y_a\neq 0$. Hence, $(x,y)\in \ker(\varphi)$, which implies by \cite[Theorem~1]{acpi}, that the there exists $b\in \operatorname{Betti}(S)$ and $z\in \mathsf{Z}(b)$ such that $z\le x$. In particular, $z_a=0$. As by hypothesis $b$ has full atomic support, $b\in a+S$, and so $z\in \mathsf{Z}(a+S)$. The minimality of $x$ yields $x=z\in \mathsf{Z}(\operatorname{Betti}(S))$.

For the other inclusion, let $z\in \mathsf{Z}(\operatorname{Betti}(S))$ be such $z_a=0$. If follows that $b=\varphi(z)$ is a Betti element, and consequently $\varphi(z)\in a+S$ (every Betti element of $S$ has full atomic support by hypothesis); whence $z\in \mathsf{Z}(a+S)$. If there exists $x\in \mathsf{Z}(a+S)$ with $x<z$, then $x_a=0$, and so there exists $y\in \mathsf{Z}(\varphi(x))$ such that $y_a\neq 0$. In particular, $(x,y)\in \ker(\varphi)$. Thus, by the same argument used above, there exists $b'\in \operatorname{Betti}(S)$ and $x'\in \mathsf{Z}(b')$ such that $x' \le x <x$. But then $b'=\varphi(x')\le_S \varphi(b)$ and $b\neq b'$, contradicting the fact that every Betti element of $S$ is Betti-minimal (Proposition~\ref{prop:fr-bm}).
\end{proof}

We now consider an invariant that measures how factorizations behave relative to a fixed atom.  Let $S$ be a monoid, $s\in S$, and let $a\in \mathcal{A}(S)$ be such that $a\le_S s$. There is then a factorization $z=(z_a)_{a\in \mathcal{A}(S)}$ with $z\in \mathsf{Z}(s)$ and $z_a\neq 0$. 
The \emph{tame degree} of $s$ with respect to $a$, $\mathsf{t}(s, a)$, is the least nonnegative integer $t$ such that for every $z\in\mathsf{Z}(s)$, there exists $z'=(z_a')_{a\in A}\in\mathsf{Z}(s)$ with $z'_a\neq 0$  and $\operatorname{d}(z,z')\le t$. The tame degree of $S$ with respect to $a$, denoted by $\mathsf{t}(S,a)$, is the supremum of all the tame degrees of the elements of $a+S$ with respect to $a$. 
The \emph{tame degree} of $S$, which we denote by $\mathsf{t}(S)$, is the supremum of the tame degrees of $S$ with respect to all the atoms.

\begin{theorem}\label{th:c=t=w-full-rank}
    Let $S$ be a monoid with full atomic support. Then, $\mathsf{c}(S)=\omega(S)=\mathsf{t}(S)$.
\end{theorem}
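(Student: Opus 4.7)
The plan is to reduce all three invariants to the common quantity
\[
B := \sup\{ |z| : z \in \mathsf{Z}(\operatorname{Betti}(S)) \}.
\]
Proposition~\ref{prop:fr-bm} says that full atomic support forces every Betti element to be incomparable, hence Betti-minimal, so Lemma~\ref{lem:c-fr} immediately yields $\mathsf{c}(S) = B$. My strategy is then to prove $\omega(S) = B$ and $\mathsf{t}(S) \le B$, and close the chain using the general inequality $\mathsf{c}(S) \le \mathsf{t}(S)$ from \cite{c-t}.

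For the $\omega$-primality, I would recall from \cite[Proposition~3.3]{omega} that $\omega(a) = \sup\{|z| : z \in \operatorname{Minimals}_\le(\mathsf{Z}(a+S))\}$. Applying Lemma~\ref{lem:fr-ZaS}, every element of $\operatorname{Minimals}_\le(\mathsf{Z}(a+S))$ is either $\mathbf{e}_a$ or a factorization of a Betti element whose support omits $a$, so $\omega(a) \le B$. For the reverse inequality, pick any $z \in \mathsf{Z}(b)$ with $b \in \operatorname{Betti}(S)$. Since $b$ is Betti-minimal, \cite[Proposition~3.6]{isolated} makes all of its factorizations isolated, so distinct factorizations of $b$ have disjoint supports; together with full atomic support, these supports cover $\mathcal{A}(S)$. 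Because $b$ is Betti, some other factorization $z' \in \mathsf{Z}(b)$ exists, and any atom $a \in \operatorname{supp}(z')$ satisfies $a \notin \operatorname{supp}(z)$, so Lemma~\ref{lem:fr-ZaS} gives $z \in \operatorname{Minimals}_\le(\mathsf{Z}(a+S))$ and hence $\omega(a) \ge |z|$. Taking suprema, $\omega(S) = B$.

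The crucial step is the tame-degree bound $\mathsf{t}(S) \le B$. Fix an atom $a$, an element $s \in a+S$, and a factorization $z \in \mathsf{Z}(s)$. If $z_a \ne 0$ take $z' = z$; otherwise $z \in \mathsf{Z}(a+S)$ and $z \ne \mathbf{e}_a$, so there exists $x \in \operatorname{Minimals}_\le(\mathsf{Z}(a+S))$ with $x \le z$, forcing $x_a = 0$. By Lemma~\ref{lem:fr-ZaS}, $b := \varphi(x)$ is a Betti element, and full atomic support guarantees some $y \in \mathsf{Z}(b)$ with $y_a \ge 1$. The swap $z' := (z - x) + y$ lies in $\mathsf{Z}(s)$ (well-defined since $x \le z$) and satisfies $z'_a \ge 1$. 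Since $z' - z = y - x$, a direct computation of coordinatewise positive parts gives $|z - (z\wedge z')| \le |x|$ and $|z' - (z\wedge z')| \le |y|$, so
\[
\operatorname{d}(z,z') \le \max(|x|,|y|) \le \sup\{|w| : w \in \mathsf{Z}(b)\} \le B.
\]
Therefore $\mathsf{t}(S,a) \le B$ for every atom $a$, whence $\mathsf{t}(S) \le B = \mathsf{c}(S) \le \mathsf{t}(S)$, and the three invariants coincide.

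The main obstacle is the tame-degree argument: one has to recognize that the minimal factorization $x \le z$ produced by Lemma~\ref{lem:fr-ZaS} supplies simultaneously a Betti element $b$ containing $a$ (so a suitable $y$ exists) and a controlled local swap whose distance is governed by the two lengths $|x|$ and $|y|$, both of which are Betti-factorization lengths. The remaining two identifications ($\mathsf{c}(S) = B$ from Lemma~\ref{lem:c-fr} and $\omega(S) = B$ from Lemma~\ref{lem:fr-ZaS} together with the standard formula for $\omega(a)$) are routine once the disjoint-support structure of isolated factorizations is in hand.
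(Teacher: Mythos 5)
Your argument is correct, and it rests on the same two pillars as the paper's proof: Lemma~\ref{lem:c-fr} (giving $\mathsf{c}(S)=\sup\{|z|:z\in\mathsf{Z}(\operatorname{Betti}(S))\}$) and Lemma~\ref{lem:fr-ZaS} (identifying $\operatorname{Minimals}_\le(\mathsf{Z}(a+S))$). The difference is in how the tame degree is handled. The paper takes the shortest route: it quotes the standard chain $\mathsf{c}(S)\le\omega(S)\le\mathsf{t}(S)$ and then bounds $\mathsf{t}(S)\le\mathsf{c}(S)$ by invoking \cite[Lemma~5.4]{omega}, which says the tame degree is realized at a factorization lying in $\operatorname{Minimals}_\le(\mathsf{Z}(a+S))$; Lemma~\ref{lem:fr-ZaS} and the isolatedness of Betti factorizations then finish it. You instead construct the bounded swap explicitly: given $z$ with $z_a=0$, you descend to a minimal $x\le z$ (legitimate, since the down-set of $z$ in $\mathbb{N}_0^{(\mathcal{A}(S))}$ is finite), recognize $x$ as a Betti factorization, use full atomic support to find $y\in\mathsf{Z}(\varphi(x))$ with $y_a\ge 1$, and check $\operatorname{d}(z,(z-x)+y)\le\max(|x|,|y|)$. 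This makes the proof more self-contained, at the cost of also having to verify $\omega(S)=B$ separately via \cite[Proposition~3.3]{omega} rather than getting it for free from the sandwich $\mathsf{c}\le\omega\le\mathsf{t}$. Both routes are sound; yours trades one external citation for a short direct computation and arguably makes clearer \emph{why} full atomic support is the right hypothesis (it is exactly what supplies the factorization $y$ of the Betti element containing the atom $a$).
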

\begin{proof}
    We already know that $\mathsf{c}(S)\le \omega(S)\le \mathsf{t}(S)$ (see for instance equation (5.1) in \cite{omega}). Thus, it suffices to prove that $\mathsf{t}(S)\le \mathsf{c}(S)$. Let $s\in S$ be minimal with respect to $\le_S$ such that $\mathsf{t}(s)=\mathsf{t}(S)$, and suppose that $\mathsf{t}(s)=\operatorname{d}(z,z')$ for some $z,z'\in \mathsf{Z}(s)$. Then, $z\in \operatorname{Minimals}_\le(\mathsf{Z}(a+S))$ for some $a\in \mathcal{A}(S)$ \cite[Lemma~5.4]{omega}. By Lemma~\ref{lem:fr-ZaS}, $z\in \mathsf{z}(b)$ for some $b\in \operatorname{Betti}(S)$, and by Proposition~\ref{prop:fr-bm}, $b$ is Betti-minimal, which by \cite[Proposition~3.6]{isolated} implies that all the factorizations of $b$ are isolated. In particular, $\operatorname{d}(z,z')=\max\{|z|,|z'|\}\le \operatorname{sup}\left\{ |x| : x\in \mathsf{Z}(\operatorname{Betti}(S))\right\}= \mathsf{c}(S)$ (Lemma~\ref{lem:c-fr}).
\end{proof}

As opposed to the generic case, a dual representation is a relation with the least possible support (in the case of numerical semigroups the least possible number of atoms involved in a minimal relation is two). Relations of this form are known as circuits, (see \cite{single, u-free}).  It is known that if an affine monoid has a single Betti element, then the set of circuits is a minimal presentation of the monoid \cite[Corollary~10]{single}. 

\begin{example}
One may wonder if Theorem \ref{th:c=t=w-full-rank} holds for affine monoids having a minimal presentation formed by circuits. The answer is no and we show this with a universally free numerical monoid taken from \cite[Example~3.13]{u-free}.  Let $S=\langle 390,546,770,1155\rangle$. With the help of \cite{numericalsgps}, we can see that $\operatorname{t}(S)=\omega(S)=77$ and $\operatorname{c}(S)=55$.
\end{example}

\begin{example}
Observe that Lemma~\ref{lem:c-fr} holds for the more general class of monoids all of whose Betti elements are incomparable (and thus all its factorizations are isolated). So, one may ask if Theorem~\ref{th:c=t=w-full-rank} holds for this broader family of monoids. The answer is no, as the following example extracted from \cite{measure-w} shows. In this example, not all the Betti elements of $S$ will be of full atomic support. Take $S=\langle 17,40,56\rangle$. By using the \texttt{GAP} \cite{gap} package \texttt{numereicalsgps} \cite{numericalsgps} we see that $\operatorname{Betti}(S)=\{136,280\}$ and $280-136\not\in S$; also by \texttt{numericalsgps} we obtain that $\operatorname{c}(S)=8$, $\operatorname{t}(S)=9$, and $\omega(S)=13$.
\end{example}


\section{The special case of block monoids}
\label{sec:block}

In Example \ref{lfex} we introduced block monoids and looked at some basic calculations of their Betti elements.  We expanded on this in Section 4 by constructing examples of restricted block monoids that contained exactly one Betti element (Examples \ref{restrictedblock} and \ref{morerestrictedblockmonoid}).  In this section we characterize block monoids with one Betti element, and show that this is equivalent to the not only the block monoid having full atomic support, but also the length factorial property.  Along the way, we tie this in with several well-known factorization properties, as well as the Davenport constant of the defining finite abelian group of the block monoid.  

We open with a relatively simple, yet important lemma.

\begin{lemma}\label{full-rank-finite} 
Let $G$ be an abelian group and $S\subseteq G$.  Set $M=\mathcal{B}(G,S)$, assumed to not be factorial and with full atomic support. Then, $\mathcal{A}(M)$ and $\betti(M)$ are finite.
\end{lemma}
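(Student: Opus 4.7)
The plan is to bound $\mathcal{A}(M)$ first using the full atomic support hypothesis together with a single Betti element as a ``universal divisor,'' and then deduce finiteness of $\betti(M)$ from R\'edei's theorem.

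First I would argue that $\betti(M)\neq\emptyset$. Since $M$ is not factorial, $\ker(\varphi)$ is a nontrivial congruence; but the construction recalled in Section~\ref{sec:basic} (from \cite{acpi}) builds every minimal presentation out of relations occurring at Betti elements, so the absence of Betti elements would force $\ker(\varphi)$ to be the diagonal, contradicting non-factoriality. (Note that $M=\mathcal{B}(G,S)$ is a submonoid of the free monoid $\mathcal{F}(S)$ and is therefore BF, so in particular it satisfies the ACCP, which is what the presentation construction requires.) Fix any $b\in\betti(M)$.

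Next I would exploit the fact that the divisibility order $\le_M$ on zero-sum sequences is just the sub-multiset order inherited from $\mathcal{F}(S)$: if $a,c\in \mathcal{B}(G,S)$ and $a$ is a sub-multiset of $c$ in $\mathcal{F}(S)$, then $c-a$ is automatically a zero-sum sequence and hence lies in $M$, so $a\le_M c$; the converse is immediate. Full atomic support now says every atom $a\in \mathcal{A}(M)$ satisfies $a\le_M b$, that is, every atom is a sub-multiset of the fixed finite sequence $b$. A finite sequence has only finitely many sub-multisets, so $\mathcal{A}(M)$ is finite.

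With $\mathcal{A}(M)$ finite the monoid $M$ is finitely generated, and R\'edei's theorem \cite{redei} (as cited in Section~\ref{sec:basic}) gives $M$ a finite presentation. Since, as recalled in Section~\ref{sec:basic}, any minimal presentation is obtained by choosing a spanning tree in $\nabla_b$ for each $b\in\betti(M)$ and taking the union over $\betti(M)$, a finite minimal presentation can contribute relations at only finitely many Betti elements. Hence $\betti(M)$ is finite as well.

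There is no real obstacle here: once one verifies that $\le_M$ coincides with sub-multiset inclusion on $\mathcal{B}(G,S)$, the statement reduces to a counting observation plus R\'edei's theorem. The only subtle point worth stating explicitly is the existence of at least one Betti element in a non-factorial ACCP monoid, which justifies fixing $b$ at the start of the argument.
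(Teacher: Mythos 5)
Your proof is correct and follows essentially the same route as the paper's: every atom divides a fixed Betti element $b$ by full atomic support, $b$ has only finitely many divisors inside the ambient free monoid $\mathcal{F}(S)$, so $\mathcal{A}(M)$ is finite, and R\'edei's theorem then yields finiteness of $\betti(M)$. The only addition is your explicit justification that a Betti element exists in a non-factorial ACCP monoid, a point the paper's proof takes for granted.
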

\begin{proof}
Let $b$ be a Betti element of $M$. Since $M$ is a submonoid of a free commutative monoid, $b$ has only finitely many divisors.  Moreover, since $b$ has full atomic support, every element of $\mathcal{A}(M)$ divides $b$.  As such, $M$ has finitely many atoms.  By \cite{redei}, $M$ then also has finitely many Betti elements.
\end{proof}

The following gives rather restrictive conditions on torsion Krull monoids all of whose Betti elements have full atomic support.  

\begin{lemma}\label{lem:blockmonoid}
Let $G$ be an abelian group and $S = \{s_1, s_2, \ldots, s_k\} \subseteq G$ a torsion subset, and assume $\mathcal{B}(G,S)$ is not factorial.  If $\mathcal{B}(G,S)$ has full atomic support, then the following hold:
\begin{enumerate}
\item 
with the exception of $a_i = \operatorname{ord}(s_i)s_i$, every atom of $\mathcal{B}(G,S)$ involves every element of $S$; and

\item 
$\langle s_1, \ldots, \hat s_j, \ldots, s_k \rangle \cong \bigoplus_{i \ne j} (\mathbb{Z}/\operatorname{ord}(s_i)\mathbb{Z})$ for each $j$.  

\end{enumerate}
\end{lemma}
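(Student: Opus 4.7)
The plan is to show that, for each $j$, the submonoid $\mathcal{B}(G, S\setminus\{s_j\})$ is factorial with atoms exactly $\{a_i : i\ne j\}$, from which both parts follow almost immediately. The crucial first step is a compatibility observation: the atoms of the submonoid are exactly the atoms of $\mathcal{B}(G,S)$ that do not involve $s_j$, and for every $B'\in\mathcal{B}(G,S\setminus\{s_j\})$ the set of factorizations of $B'$, and hence the graph $\nabla_{B'}$, is the same whether computed in the submonoid or in $\mathcal{B}(G,S)$. This is because any decomposition of a sequence not involving $s_j$ uses only subsequences not involving $s_j$. In particular, Betti elements of $\mathcal{B}(G,S\setminus\{s_j\})$ are also Betti elements of $\mathcal{B}(G,S)$.

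Next I would argue that $\mathcal{B}(G,S\setminus\{s_j\})$ is factorial. If it were not, then, since it satisfies the ACCP as a submonoid of a free commutative monoid, it would admit a Betti element $B'$ (an ACCP monoid with no Betti elements has empty minimal presentation and is hence factorial). By the compatibility observation, $B'$ would also be a Betti element of $\mathcal{B}(G,S)$, but since $B'$ does not involve $s_j$ we would have $a_j\not\le B'$, contradicting the full atomic support hypothesis. Being factorial and containing the $\mathbb{Z}$-independent atoms $\{a_i : i\ne j\}$ inside $\mathcal{F}(S\setminus\{s_j\})\cong \mathbb{N}_0^{k-1}$, the submonoid has rank at most $k-1$, which forces these to be all of its atoms.

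Part (1) is then immediate: any atom of $\mathcal{B}(G,S)$ not involving some $s_j$ lies in $\mathcal{B}(G,S\setminus\{s_j\})$ and hence equals some $a_i$ with $i\ne j$; contrapositively, every atom distinct from all the $a_i$ involves each element of $S$. For Part (2) I would use the natural surjection
\[
\psi\colon \bigoplus_{i\ne j}\mathbb{Z}/\operatorname{ord}(s_i)\mathbb{Z}\twoheadrightarrow \langle s_1,\ldots,\hat s_j,\ldots,s_k\rangle,\qquad e_i\mapsto s_i,
\]
whose injectivity reduces to the observation that a relation $\sum_{i\ne j}n_i s_i=0$ with $0\le n_i<\operatorname{ord}(s_i)$ yields a zero-sum sequence in $\mathcal{B}(G,S\setminus\{s_j\})$ which, by the factoriality just established, must be a product of the $a_i$'s; comparing multiplicities of each $s_i$ forces every $n_i$ to be $0$.

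The main obstacle I expect is the compatibility step, since one must rule out the possibility that introducing $s_j$ into the ground set creates factorization bridges between previously disconnected components of $\nabla_{B'}$. The resolution is the straightforward fact that any zero-sum block appearing in a factorization of $B'$ cannot involve $s_j$ if $B'$ itself does not, but this deserves to be stated carefully because the rest of the argument hinges on it.
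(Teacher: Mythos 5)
Your proof is correct, but it takes a genuinely different route from the paper's. The paper handles part~(1) atom by atom: given an atom $c$ distinct from the $a_i$ in which some $s_j$ appears, it uses root-closedness of $\mathcal B(G,S)$ to see that $\operatorname{ord}(s_j)c$ is divisible by $a_j$ and hence has a second factorization, takes the smallest multiple $b_c$ of $c$ that is a Betti element, and concludes from full atomic support of $b_c$ (whose support as a sequence equals that of $c$) that $c$ involves all of $S$; part~(2) is then obtained by a pigeonhole argument, since a coincidence $\sum m_i s_i = \sum m_i' s_i$ would manufacture an atom violating~(1). You instead pass to the divisor-closed submonoid $\mathcal B(G,S\setminus\{s_j\})$, observe (correctly --- this is exactly the divisor-closedness you flag as the delicate point) that its factorizations and Betti elements agree with those computed in $\mathcal B(G,S)$, and deduce from full atomic support that it has no Betti elements, hence is factorial and free of rank at most $k-1$ on the atoms $a_i$, $i\ne j$. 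Both arguments ultimately rest on the same mechanism --- a Betti element not divisible by $a_j$ contradicts full atomic support --- but yours packages it as a single structural statement (factoriality of each $\mathcal B(G,S\setminus\{s_j\})$) from which~(1) and~(2) both fall out, and in particular~(2) comes for free from unique factorization rather than from a separate counting argument. The paper's version buys the extra observation that every atom $c\ne a_i$ sits below a Betti element that is a multiple of $c$, which is reused in the proof of Theorem~\ref{thm:blockmonoid}; your version buys a cleaner intermediate invariant and avoids invoking root-closedness. The only points worth making explicit in a write-up are that each $a_i$ is indeed an atom, that $\mathcal B(G,S\setminus\{s_j\})$ is atomic and satisfies the ACCP (immediate, as a submonoid of $\mathbb N_0^{k-1}$) so that the absence of Betti elements really does force factoriality, and that the Grothendieck group of this factorial submonoid embeds in $\mathbb Z^{k-1}$, which is what caps the number of atoms at $k-1$.
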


\begin{proof}
Clearly each $a_i \in \mathcal A(\mathcal B(G,S))$.  Suppose $c \in \mathcal A(\mathcal B(G,S))$ is an atom distinct from the $a_i$ (note that at least one such atom must exist since $\mathcal B(G,S)$ is not factorial).  We claim every element of $S$ appears in $c$.  Indeed, suppose $s_j \in S$ appears in $c$.  Since $\mathcal{B}(G,S)$ is root-closed, $\operatorname{ord}(s_j)c$ is divisible by $a_j$ and thus has more than one factorization (see \cite[Lemma~3.14]{isolated}).  

Let $b_c \in \operatorname{Betti}(\mathcal B(G,S))$ denote the smallest multiple of $c$ with more than one factorization.  Since $b_c$ has full atomic support, it must be divisible by $a_i$ for every~$i$.  This means $b_c$ involves every element of $S$, and thus so does $c$.  

Next, up to relabeling, we can assume $j = k$, so consider the subgroup 
\[
G' = \langle s_1, \ldots, s_{k-1} \rangle = \left\{\sum_{i=1}^{k-1} m_i s_i : 0 \le m_i < \operatorname{ord}(s_i)\right\}.
\]
We claim $|G'|=\prod_{i=1}^{k-1} \operatorname{ord}(s_i)$.  Indeed, suppose $|G'|<\prod_{i=1}^{k-1}\operatorname{ord}(s_i)$.  By the pigeonhole principle, there exist two choices of coefficients that give the same element of $G'$, i.e., $\sum_{i=1}^{k-1} m_i s_i = \sum_{i=1}^{k-1} m_i' s_i$.  But then $\sum_{i=1}^{k-1}(m_i-m_i'\pmod{\operatorname{ord}(s_i)}s_i=0$, which implies there is an atom of $\mathcal{B}(G,S)$ distinct from the $a_i$ not involving $s_j$, which contradicts~(1). Hence~(2) is proved.  
\end{proof}

Lemma~\ref{lem:blockmonoid} allows us to prove the following technical result, which in some sense is a formalization of Example~\ref{lfex}.
    
\begin{theorem}\label{thm:blockmonoid}
Let $G$ be an abelian group and $S = \{s_1, s_2, \ldots, s_k\} \subseteq G$ a torsion subset, and assume $\mathcal B(G,S)$ is not factorial.  For each $i$, let $r_i$ be minimal such that $r_i s_i \in \langle s_1, \ldots, \hat s_i, \ldots, s_k \rangle$, let $t_i = r_i s_i$, and let $T = \{t_1, \ldots, t_k\}$. 
If $\mathcal{B}(G,S)$ has full atomic support, then the following hold:
\begin{enumerate}
\item 
$d := \operatorname{ord}(t_1) = \cdots = \operatorname{ord}(t_k)$ 
and 
$t_1 + \cdots + t_k = 0$;

\item
the atoms of $\mathcal B(G,S)$ are $d(t_1), \ldots, d(t_k)$, and $t_1 + \cdots + t_k$; and 

\item 
$b = d t_1 + \cdots + d t_k$ is the unique Betti element of $\mathcal B(G,S)$.  

\end{enumerate}
Moreover, $\mathcal B(G,S)$ is half-factorial if and only if $k = d$, and length-factorial otherwise.  
\end{theorem}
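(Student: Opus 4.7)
The plan is to identify the unique Betti element and the unique mixed atom of $\mathcal{B}(G,S)$, from which (1), (2), (3) follow, and then apply Theorem \ref{lfobe} and Proposition \ref{betti-half-factorial} for the ``moreover'' dichotomy.

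First, by Lemma \ref{lem:blockmonoid}, the atoms of $\mathcal{B}(G,S)$ are the $a_i := s_i^{\operatorname{ord}(s_i)}$ together with mixed atoms supported on all of $S$, and each subgroup $\langle s_1, \ldots, \hat{s}_j, \ldots, s_k \rangle$ is internally direct. For any mixed atom $C = s_1^{c_1}\cdots s_k^{c_k}$, the relation $c_i s_i = -\sum_{j \ne i} c_j s_j$ combined with minimality of $r_i$ gives $r_i \mid c_i$, and atomicity forces $c_i < \operatorname{ord}(s_i)$. So every mixed atom divides $b_0 := a_1 + \cdots + a_k = s_1^{\operatorname{ord}(s_1)} \cdots s_k^{\operatorname{ord}(s_k)}$. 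Moreover, $b_0$ admits the factorization $(a_1, \ldots, a_k)$ and, via any mixed atom $C$, another factorization (by factoring $b_0 - C$), so $b_0 \in \betti(\mathcal{B}(G,S))$; full atomic support forces every Betti element to be divisible by each $a_i$ and hence by $b_0$, and Proposition \ref{prop:fr-bm} (incomparability of Betti elements) then yields that $b_0$ is the unique Betti element. Lemma \ref{cor:atom-divides-single-betti} partitions the atoms by factorization supports; any factorization using some $a_j$ must exclude mixed atoms (else the $s_j$-exponent overshoots $\operatorname{ord}(s_j)$), so $(a_1,\ldots,a_k)$ is the only factorization containing any $a_j$, and all others use only mixed atoms.

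Next, I show all mixed atoms coincide with $C := s_1^{r_1}\cdots s_k^{r_k}$. For each $j$, writing $r_j s_j = \sum_{i \ne j} \alpha_{ji} s_i$ uniquely with $0 \le \alpha_{ji} < \operatorname{ord}(s_i)$ (via Lemma \ref{lem:blockmonoid}(2)), each $\alpha_{ji} > 0$ by Lemma \ref{lem:blockmonoid}(1) (otherwise one could construct an atom involving $s_j$ but not $s_i$), so the block $B_j := s_j^{r_j} \prod_{i \ne j} s_i^{\operatorname{ord}(s_i) - \alpha_{ji}}$ is a mixed atom. Suppose some mixed atom $C'$ has $s_1$-exponent $c_1 = k r_1$ with $k \ge 2$. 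A direct computation using the uniqueness of representation of $k r_1 s_1$ in $\langle s_2, \ldots, s_k\rangle$ yields the identity $k B_1 = C' + \sum_{i=2}^{k} q_i a_i$, with each $q_i \ge 0$ and at least one $q_i > 0$ (otherwise $C' = k B_1$ would not be atomic). This exhibits $k B_1$ with two factorizations of disjoint support, so $k B_1$ is Betti; its $s_1$-exponent $k r_1 < \operatorname{ord}(s_1)$ shows $a_1 \nmid k B_1$, making $k B_1$ a Betti element lacking full atomic support, a contradiction. Hence $c_i = r_i$ for every mixed atom and every $i$, so the mixed atom is unique, namely $C$. The block $C$ being zero-sum gives $\sum r_i s_i = t_1 + \cdots + t_k = 0$, and the unique mixed-atom factorization $b_0 = d\cdot C$ forces $d r_i = \operatorname{ord}(s_i)$ for all $i$, so $d = d_i = \operatorname{ord}(t_i)$ is constant. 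This establishes (1), (2), (3).

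For the ``moreover'': the unique Betti element $b = b_0$ has exactly two factorizations, $(a_1,\ldots,a_k)$ of length $k$ and $d\cdot C$ of length $d$. If $k = d$, both have equal length, so $b$ is half-factorial, and Proposition \ref{betti-half-factorial} yields that $\mathcal{B}(G,S)$ is half-factorial. If $k \ne d$, then $b$ is the unique Betti element of $\mathcal{B}(G,S)$ with exactly two factorizations of distinct length, and Theorem \ref{lfobe} yields that $\mathcal{B}(G,S)$ is length factorial. The main obstacle is the identity $k B_1 = C' + \sum_{i=2}^{k} q_i a_i$ in the third paragraph: one must carefully exploit the internal directness of $\langle s_2, \ldots, s_k\rangle$ to pin down the exponents of $C'$ from those of $kB_1$, and verify that the subtraction yields a nonzero non-negative combination of $a_2, \ldots, a_k$.
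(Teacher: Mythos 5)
Your argument is essentially correct and reaches all the stated conclusions, but it takes a genuinely different route from the paper's. The paper first passes to $\mathcal B(G,T)$ (which forces it to verify that the $t_i$ are nonzero and distinct and to dispatch the special case $k=2$, $t_1=t_2$ separately), deduces the common order $d$ from an $\operatorname{lcm}$ computation over the $(k-1)$-element subsets, pins down the coefficients of the mixed atom via a root-closedness argument applied to $(d-1)c$, and obtains uniqueness of the Betti element from a dimension count ($k+1$ atoms in a full-dimensional affine semigroup in $\mathbb Z^k$). You instead stay inside $\mathcal B(G,S)$ throughout, which neatly sidesteps the distinctness issues; you get uniqueness of the Betti element early and cheaply from Proposition~\ref{prop:fr-bm}, once you observe that divisibility by every $a_i$ forces divisibility by $b_0=a_1+\cdots+a_k$ (true here because the $a_i$ occupy disjoint coordinates of $\mathcal F(S)$ --- worth saying explicitly); and you extract $d$ and the relation $t_1+\cdots+t_k=0$ directly from the mixed-atom-only factorization $d\cdot C$ of $b_0$. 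Your route is arguably more self-contained; its price is the identity $mB_1=C'+\sum_{i\ge 2} q_i a_i$, which you rightly flag as the delicate point and which does check out (each $q_i\ge 0$ because $c_i<\operatorname{ord}(s_i)$ and $\alpha_{1i}\ge 1$, and some $q_i>0$ since otherwise $C'=mB_1$ is not an atom).

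One step is stated in a form that is not valid as a general principle: two factorizations with disjoint support do not by themselves make an element a Betti element, since further factorizations could bridge the two components of the factorization graph. For $b_0$ this is harmless, because you separately show that $(a_1,\ldots,a_k)$ is the \emph{only} factorization of $b_0$ meeting any $a_j$, so that factorization is genuinely isolated. But for $mB_1$ the inference ``two disjoint-support factorizations, so $mB_1$ is Betti'' should be replaced by the weaker, sufficient statement the paper itself uses elsewhere: an element with more than one factorization is divisible by some Betti element (\cite[Corollary~3.8]{isolated}). Since by that point $b_0$ is already known to be the unique Betti element and $a_1\nmid mB_1$ (its $s_1$-exponent is $mr_1<\operatorname{ord}(s_1)$, as $C'$ is an atom), the contradiction still follows. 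With that one-line repair, together with a short justification that each $B_j$ is indeed an atom (its $s_j$-exponent $r_j$ is the minimal positive $s_j$-exponent of any zero-sum block, and the internal directness of $\langle s_i\rangle_{i\ne j}$ from Lemma~\ref{lem:blockmonoid}(2) rules out a complementary zero-sum factor), the proof is complete.
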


\begin{proof}
Since
\[
\langle s_i \rangle \cap \langle s_1, \ldots, \hat s_i, \ldots, s_k \rangle = \langle s_i^{r_k} \rangle
\]
as subgroups of $G$, in any zero-sum sequence $\sum_{i=1}^{k} m_i s_i$ we must have $r_i \mid m_i$ for each $i$.  In~particular, in any zero-sum sequence in $s_1, \ldots, s_k$, one can substitute $r_i s_i \mapsto t_i$ for each $i$ to obtain a zero-sum sequence in $t_1, \ldots, t_k$.  

We briefly address the case $k = 2$ and $t_1 = t_2$. 
In this case, one can readily check
\[
\mathcal A(\mathcal B(G,S)) = \big\{ i(r_1 s_1) + (d-i)(r_2 s_2) : 0 \le i \le d \big\},
\]
where $d = \operatorname{ord}(t_1) = \operatorname{ord}(t_2) \ge 2$.  If $d \ge 3$, then 
\[
b = 2\big( (r_1 s_1) + (d-1)(r_2 s_2) \big)
= \big( 2 (r_1 s_1) + (d-2)(r_2 s_2) \big) + \big( d(r_2 s_2) \big)
\]
is a Betti element without $d(r_1 s_1)$ in its atomic support.  This implies $d = 2$, so~(1)-(3) hold.  

Proceeding now with all other cases, we verify two additional items:
\begin{itemize}
\item 
each $t_i$ is nonzero (indeed, if some $t_j = 0$, applying Lemma~\ref{lem:blockmonoid}(2) to $S$ and $j$ implies
\[
\langle s_1, \ldots, s_k \rangle 
\cong \langle s_j \rangle \oplus \langle s_1, \ldots, \widehat s_j, \ldots, s_k \rangle
\cong (\mathbb{Z}/\operatorname{ord}(s_1)\mathbb{Z})\oplus \cdots \oplus (\mathbb{Z}/\operatorname{ord}(s_k)\mathbb{Z})
\]
which is impossible since $\mathcal B(G,S)$ is not factorial); and

\item 
the $t_i$ are distinct (indeed, if $t_i = t_j$ for some $i \ne j$, then the case $k = 2$ has already been handled, and for the case $k \ge 3$, applying Lemma~\ref{lem:blockmonoid}(2) to any $(k-1)$-element subset of $\{s_1, \ldots, s_k\}$ containing both $s_i$ and $s_j$ would force $t_i = t_j = 0$, which contradicts the preceding item).  

\end{itemize}
As such, the mapping $t_i \mapsto r_i s_i$ induces a natural isomorphism $\mathcal B(G;T) \cong \mathcal B(G,S)$.  In what follows, we will prove~(2) and~(3) by proving 
\[
\mathcal A(\mathcal B(G,T)) = \{dt_1, \ldots, dt_k, t_1 + \cdots + t_k\}
\quad \text{and} \quad
\operatorname{Betti}(\mathcal B(G,T)) = \{dt_1 + \cdots + dt_k\}.
\]
Note $\mathcal B(G,T)$ has full atomic support since $\mathcal B(G,S)$ does, and each $t_i \in \langle t_1, \ldots, \hat t_i, \ldots, t_k \rangle$.  

Now, letting $a_i = \operatorname{ord}(t_i)t_i \in \mathcal A(\mathcal B(G,T))$ for each $i$, applying Lemma~\ref{lem:blockmonoid} to $\mathcal B(G,T)$ implies any atom $c \in \mathcal A(\mathcal B(G,T))$ distinct from the $a_i$ involves every element of $T$, and the subgroup 
\[
G' = \langle t_1, \ldots, t_{k-1} \rangle \cong (\mathbb{Z}/\operatorname{ord}(t_1)\mathbb{Z})\oplus \cdots \oplus (\mathbb{Z}/\operatorname{ord}(t_{k-1})\mathbb{Z}).
\]
By construction $t_k \in G'$, so $t_k = \sum_{i=1}^{k-1} m_i t_i$ with $0 \le m_i < \operatorname{ord}(t_i)$ for each $i$.  It must be that for each $i$, $\gcd(m_i, \operatorname{ord}(t_i)) = 1$, since if this were not the case, then $t_i \notin \langle t_1, \ldots, \hat t_i, \ldots, t_k \rangle$, which is impossible.  As such, $\operatorname{ord}(t_k) = \operatorname{lcm}(\operatorname{ord}(t_1), \ldots, \operatorname{ord}(t_{k-1}))$.  However, applying Lemma~\ref{lem:blockmonoid} to any $(k-1)$-element subset of $\{t_1, \ldots, t_k\}$ yields 
\[
\operatorname{ord}(t_i) = \operatorname{lcm}(\operatorname{ord}(t_1), \ldots, \widehat{\operatorname{ord}(t_i)}, \ldots, \operatorname{ord}(t_k))
\]
for each $i$, which is only possible if $\operatorname{ord}(t_1) = \cdots = \operatorname{ord}(t_k)$.  This proves the first part of~(1).  

Let $d = \operatorname{ord}(t_1)$.  Since $t_k \in G'$, there is a unique expression of the form $\sum_{i=1}^{k-1} m_i t_i + t_k = 0$ with $0 \le m_i < d$ for each $i$.  We claim $m_i = 1$ for each $i$.  Indeed, suppose some $m_j > 1$, and let $c = \sum_{i=1}^{k-1} m_i t_i + t_k$, which is necessarily an atom due to the singular copy of $t_k$ and the  uniqueness of $m_1, \ldots, m_{k-1}$.  Since $\mathcal B(G,T)$ is root-closed, the element $(d-1)c \in \mathcal B(G,T)$ is divisible by $a_j$ and thus has a factorization with $a_j$ in its support.  As such, $(d-1)c$ is divisible by a (full atomic support) Betti element, which is impossible since $a_k$ does not divide $(d-1)c$.  This proves the claim, and thus~(1).  

Only three claims remain to be verified.  For each $j = 2, \ldots, k-1$, we see $j(t_1 + \cdots + t_k)$ is the unique element of $\mathcal B(G,T)$ of the form $\sum_{i=1}^{k-1} m_i t_i + jt_k$ with $0 \le m_i < d$ for each $i$, and thus not an atom, so~(2) is proven.  The uniqueness of the Betti element in~(3) follows from the observation that $\mathcal B(G,T)$ can be realized as a full-dimensional affine semigroup in $\mathbb Z^k$ with $k+1$ atoms.  Lastly, the half- and length-factoriality claims follow from Theorem~\ref{lfobe} and the fact that
\[
d(t_1 + \cdots + t_k) = a_1 + \cdots + a_k
\]
are the only factorizations of the only Betti element of $\mathcal B(G,T)$.  
\end{proof}

We show in the next result that for full block monoids the notion of length factorial and one Betti element coincide.

\begin{corollary}\label{cor:blockmonoid}
Let $G$ be an abelian group.  The following statements are equivalent:
\begin{enumerate}
\item $\mathcal{B}(G)$ is a non-factorial block monoid in which all Betti elements have full atomic support;
\item $\mathcal{B}(G)$ is a block monoid with exactly one Betti element;
\item  $G$ is isomorphic to $\mathbb{Z}_2\oplus \mathbb{Z}_2$ or $\mathbb{Z}_3$;
\item the Davenport constant of $G$ is equal to 3;
\item $1<\rho(\mathcal{B}(G))\leq 3/2$; and
\item $\mathcal{B}(G)$ is length factorial but not factorial.
\end{enumerate}
\end{corollary}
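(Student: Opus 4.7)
The plan is to run the cycle (1)$\Rightarrow$(3)$\Rightarrow$(2)$\Rightarrow$(1) for the core structural equivalence, then hook in (6) via the ``length-factorial iff $k\ne d$'' statement at the end of Theorem~\ref{thm:blockmonoid}, and finally obtain (4) and (5) from standard identities for the Davenport constant and the elasticity of $\mathcal{B}(G)$.

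For (1)$\Rightarrow$(3), I would first apply Lemma~\ref{full-rank-finite} to conclude $|\mathcal{A}(\mathcal{B}(G))|<\infty$; since each nontrivial $g\in G$ contributes a distinct atom $\operatorname{ord}(g)g$, this forces $G$ itself to be finite. Applying Theorem~\ref{thm:blockmonoid} with $S=G\setminus\{0\}$ and $k=|G|-1$, parts~(1) and~(2) of that theorem yield a common order $d$ of the $t_i$ together with an isomorphism $G\cong(\mathbb{Z}/d\mathbb{Z})^{k-1}$, so $|G|=d^{k-1}$. Comparing cardinalities gives the Diophantine equation $d^{k-1}=k+1$ with $d,k\ge 2$, whose only solutions are $(d,k)=(3,2)$ and $(d,k)=(2,3)$, corresponding to $\mathbb{Z}_3$ and $\mathbb{Z}_2\oplus\mathbb{Z}_2$. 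The implication (3)$\Rightarrow$(2) is the direct calculation tabulated in Example~\ref{lfex}, and (2)$\Rightarrow$(1) is the remark immediately following Lemma~\ref{atom-betti} to the effect that a single Betti element forces full atomic support, closing the core cycle.

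For the length-factoriality equivalence, the final line of Theorem~\ref{thm:blockmonoid} asserts that $\mathcal{B}(G,S)$ is length-factorial precisely when $k\ne d$; both solutions $(d,k)=(3,2)$ and $(2,3)$ satisfy this, giving (1)$\Rightarrow$(6), and the converse (6)$\Rightarrow$(2) is immediate from Theorem~\ref{lfobe}. For (3)$\Leftrightarrow$(4)$\Leftrightarrow$(5), I would cite $\mathsf{D}(\mathbb{Z}_3)=\mathsf{D}(\mathbb{Z}_2\oplus\mathbb{Z}_2)=3$ together with $\mathsf{D}(\mathbb{Z}_n)=n$ and the invariant-factor lower bound $\mathsf{D}(G)\ge 1+\sum(n_i-1)$ to verify that these are the only finite abelian groups with Davenport constant $3$; for (5) I would combine this with the classical identity $\rho(\mathcal{B}(G))=\mathsf{D}(G)/2$ (valid for $|G|\ge 3$) together with the fact that $\rho(\mathcal{B}(G))=1$ precisely when $|G|\le 2$. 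The main obstacle in the plan is the Diophantine step ruling out $k\ge 4$ in $d^{k-1}=k+1$; this is elementary via $d^{k-1}\ge 2^{k-1}>k+1$ for $k\ge 4$, but it is the one place where the argument is not pure bookkeeping assembly of the preceding technical machinery.
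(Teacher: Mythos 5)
Your proof is correct, and while it rests on the same technical machinery (Lemma~\ref{full-rank-finite}, Lemma~\ref{lem:blockmonoid}, Theorem~\ref{thm:blockmonoid}, Theorem~\ref{lfobe}, \cite{GS92}, and \cite{BC11}), it is assembled differently from the paper's proof and finishes the key implication by a different device. The paper establishes (1)$\Leftrightarrow$(2) directly from Theorem~\ref{thm:blockmonoid} and then runs the single chain (1)$\Rightarrow$(3)$\Rightarrow$(4)$\Rightarrow$(5)$\Rightarrow$(6)$\Rightarrow$(2); its step (1)$\Rightarrow$(3) is a structural case analysis on $S=G\setminus\{0\}$ (if $k=2$ then $s_1=-s_2$ gives $\mathbb{Z}_3$; if $k\ge 3$ then $s_1+s_2$ must equal $s_k$, forcing $k=3$ and $G\cong\mathbb{Z}_2\oplus\mathbb{Z}_2$). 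You instead close a core cycle (1)$\Rightarrow$(3)$\Rightarrow$(2)$\Rightarrow$(1) and hook in (4)--(6) separately, and your (1)$\Rightarrow$(3) is a counting argument: all nonzero elements of $G$ have common order $d$, the complement of any one of them generates a subgroup isomorphic to $(\mathbb{Z}/d\mathbb{Z})^{k-1}$, and comparing $|G|=d^{k-1}$ with $|G|=k+1$ leaves only $(d,k)\in\{(3,2),(2,3)\}$. This is arguably cleaner than the paper's case analysis, at the cost of two small points you should make explicit: the direct-sum decomposition you invoke is Lemma~\ref{lem:blockmonoid}(2) rather than part~(2) of Theorem~\ref{thm:blockmonoid}, and you need the (easy) observation that $\langle S\setminus\{s_j\}\rangle=G$ whenever $|G|\ge 3$, so that the decomposed subgroup really is all of $G$ (together with $r_i=1$, which holds since $s_i\in\langle S\setminus\{s_i\}\rangle=G$). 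Your treatment of (4)$\Leftrightarrow$(5) and of the length-factoriality clause matches the paper's in substance.
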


\begin{proof}
The equivalence of~(1) and~(2) follows directly from Theorem~\ref{thm:blockmonoid}.  

(1)$\Rightarrow$(3). First, we note that $\mathcal{A}(\mathcal{B}(G))$ is finite if and only if $G$ is finite \cite[Theorem 3.4.2 and Proposition 5.1.3]{GHKb}.  
Then, applying Theorem~\ref{thm:blockmonoid} to $\mathcal B(G, S)$ with $S = G \setminus \{0\} = \{s_1, \ldots, s_k\}$, we must have $r_i = 1$ for all $i$.  Now,  if $k = 2$, then $s_1 = -s_2$ and thus $G \cong \mathbb Z_3$.  If, on the other hand, $k \ge 3$, then Lemma~\ref{lem:blockmonoid}(2) implies $s_1 + s_2 \notin \{s_1, \ldots, s_{k-1}\}$ and thus $s_1 + s_2 = s_k$.  Theorem~\ref{thm:blockmonoid}  thenforces $k = 3$ and $s_3 = s_1 + s_2$, and so $G \cong \mathbb Z_2 \oplus \mathbb Z_2$.

(3)$\Rightarrow$(4). By \cite[Theorem 1]{GS92}, $\mathsf{D}(\mathbb{Z}_2\oplus \mathbb{Z}_2)=\mathsf{D}(\mathbb{Z}_3)=3$.

(4)$\Rightarrow$(5). By \cite[Proposition 5.5]{BC11}, $\mathcal{B}(G) = \frac{3}{2}$.

(5)$\Rightarrow$(6). Again by \cite[Proposition 5.5]{BC11}, $\rho(\mathcal{B}(G) = \frac{\mathsf{D}(G)}{2}$ where $\mathsf{D}(G)$ is an integer greater than or equal to 2.  By (4), our only option is $\rho(\mathcal{B}(G) = \frac{3}{2}$.  Thus, $\mathsf{D}(G) =3$ and again appealing to \cite[Theorem 1]{GS92}, we obtain that $G=\mathbb{Z}_3$ or $G=\mathbb{Z}_2\oplus\mathbb{Z}_2$. By Example~\ref{lfex}, Both $\mathcal{B}(\mathbb{Z}_3)$ and $\mathcal{B}(\mathbb{Z}_2\oplus\mathbb{Z}_2)$ are length factorial.      

(6)$\Rightarrow$(2). This follows from Proposition~\ref{lfobe}.
\end{proof}

\section*{Acknowledgements}

The second author was partially supported by the Proyecto de Excelencia de la Junta de Andaluc\'ia grant ProyExcel\_00868, the Junta de Andaluc\'ia grant FQM--343, and the Spanish Ministry of Science and Innovation (MICINN) grant ``Severo Ochoa and María de Maeztu Programme for Centres and Unities of Excellence'' (CEX2020-001105-M).
The~second and third authors received financial support from PID2022-138906NB-C21, funded by the Agencia Estatal de Investigaci\'on (AEI) grant MICIU/AEI/10.13039/501100011033, as well as from the European Regional Development Fund (ERDF) program ``A way of making Europe.'' 
The~visit of the third author to the University of Granada was also supported by the University of Granada research fund program ``Estancias de Investigadores de otros Centros Nacionales y Extranjeros en Departamentos e Institutos o Centros de Investigaci\'on de la Universidad de Granada.''

\end{document}